\begin{document}

\newtheorem{theorem}{Theorem}[section]
\newtheorem{prop}[theorem]{Proposition}
\newtheorem{lemma}[theorem]{Lemma}
\newtheorem{cor}[theorem]{Corollary}

\theoremstyle{definition}
\newtheorem{defn}[theorem]{Definition}
\newtheorem{rmk}[theorem]{Remark}

\newcommand\Z{{\mathbb{Z}}}
\newcommand\ZG{{\mathbb{Z}}G}
\newcommand\ra{{\rightarrow }}

\title[One-relator K\"ahler groups]{One-relator K\"ahler groups}

\author[I. Biswas]{Indranil Biswas}

\address{School of Mathematics, Tata Institute of Fundamental
Research, Homi Bhabha Road, Bombay 400005, India}

\email{indranil@math.tifr.res.in}

\author[M. Mj]{Mahan Mj}

\address{RKM Vivekananda University, Belur Math, WB-711202, India}

\email{mahan.mj@gmail.com; mahan@rkmvu.ac.in}

\subjclass[2000]{57M50,  32Q15, 57M05 (Primary);  14F35, 32J15 (Secondary)}

\date{\today}

\thanks{Research of the
second author is partly supported by  CEFIPRA Indo-French Research Grant 4301-1}

\begin{abstract}
We prove that a one-relator group $G$ is K\"ahler if and only if
either $G$ is finite cyclic or $G$ is isomorphic to
the fundamental group of a compact
orbifold Riemann surface of genus $g > 0$ with at most
 one cone point of order $n$:
$$
\langle a_1\, ,b_1\, , \,\cdots\, ,a_g\, ,b_g\, \mid\, 
(\prod_{i=1}^g [a_i\, ,b_i])^n\rangle\, .
$$
\end{abstract}

\maketitle

\section{Introduction}

Fundamental groups  of  compact K\"ahler manifolds, or
 {\it K\"ahler groups} for short, have attracted much attention 
(see \cite{abckt} for a survey of results and techniques). From a 
very different point of view, one-relator groups  have been 
studied for a long time in  combinatorial group theory 
\cite[Ch.~2]{ls}. (A one-relator group is the
quotient of a free group with finitely many generators by
one relation.) It is natural to ask which groups occur in the 
intersection of these two classes. In fact one-relator groups 
have appeared as test-cases for various restrictions developed 
for K\"ahler groups. Specific examples have been ruled out by
Arapura \cite[Section 7J]{arapura-exp}. Restrictions have been obtained from 
the point of view of rational homotopy theory (see
\cite[Sections~3,~4]{amoros-cmh} and \cite[p. 
39,~Examples~3.26,~3.27]{abckt}). Further 
restrictions follow from works of Gromov \cite{gromov-pi1} and 
Green and Lazarsfeld \cite{gl}.

In \cite{arapura-exp}, Arapura asks which one-relator groups are 
K\"ahler (see \cite[p. 12, Section J]{arapura-exp}). This
question was also raised by Amor{\'o}s. Our aim here is to
give a complete answer to Arapura's question. We prove the 
following (see Theorem \ref{main} and Section \ref{se-c}):

\begin{theorem}\label{th-i-a}
Let $G$ be an infinite one-relator group. Then $G$ is K\"ahler if 
and only if it is isomorphic to 
$$
\langle a_1\, ,b_1\, , \,\cdots\, ,a_g\, ,b_g\, \mid\, 
(\prod_{i=1}^g [a_i\, ,b_i])^n 
\rangle\, ,
$$
where $g$ and $n$ are some positive integers.
\end{theorem}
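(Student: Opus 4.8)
\emph{Proof strategy (proposal).} Write $\Gamma_{g,n}$ for the group in the statement; I would prove both implications as follows. The ``if'' direction is a construction: $\Gamma_{g,n}$ is the orbifold fundamental group of a compact orbifold Riemann surface $\mathcal C$ of genus $g\ge 1$ with one cone point of order $n$. If $(g,n)=(1,1)$ it is $\mathbb{Z}^2$, the fundamental group of an elliptic curve. Otherwise the orbifold Euler characteristic is negative, so by Selberg's lemma (the group is finitely generated and linear) there is a finite index torsion-free subgroup $\Gamma'=\pi_1(\Sigma')$, a surface group, and $\mathcal C=\Sigma'/F$ for $F=\Gamma_{g,n}/\Gamma'$ acting holomorphically. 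Choose a simply connected smooth projective variety $Y$ with a free $F$-action (for instance the universal cover of a smooth projective variety with fundamental group $F$, which exists by Serre's theorem). Then $F$ acts freely and holomorphically on $\Sigma'\times Y$, and $Z:=(\Sigma'\times Y)/F$ is a smooth projective variety fitting into $1\to\pi_1(\Sigma')\to\pi_1(Z)\to F\to 1$. Since $\pi_1(\Sigma')$ has trivial centre, this extension is determined by the induced outer action $F\to\mathrm{Out}(\pi_1(\Sigma'))$; that action is the geometric one and hence coincides with the one defining $\Gamma_{g,n}$, so $\pi_1(Z)\cong\Gamma_{g,n}$.

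For the ``only if'' direction let $G$ be an infinite one-relator K\"ahler group. Since an infinite K\"ahler group is neither a non-trivial free product nor a non-abelian free group (Gromov \cite{gromov-pi1}; finite index subgroups of free groups have odd first Betti numbers), I may take a reduced presentation $G=\langle x_1,\dots,x_k\mid w^m\rangle$ with $k\ge 2$, $w$ cyclically reduced, not a proper power, involving all generators, and $m\ge 1$. Hodge theory forces $b_1(G)$ to be even, so $b_1(G)\ge 2$ (as $b_1(G)\ge k-1\ge 1$). The Albanese map of a K\"ahler manifold $X$ with $\pi_1(X)=G$ is then non-constant; pulling back an ample class from its image (a positive-dimensional subvariety of an abelian variety, on which the square of the first cohomology surjects onto a space containing that class) and using that surjective morphisms are injective on cohomology shows that $\wedge^2 H^1(G;\mathbb{Q})\to H^2(G;\mathbb{Q})$ is non-zero, whence $b_2(G;\mathbb{Q})\ge 1$. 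On the other hand, comparing Euler characteristics of a presentation $2$-complex gives $\mathrm{def}(H)\le b_1(H)-b_2(H;\mathbb{Q})$ for any finitely presented $H$, so $k-1\le\mathrm{def}(G)\le b_1(G)-b_2(G;\mathbb{Q})\le b_1(G)-1$. With $b_1(G)\le k$ this forces $b_1(G)=k$ (so $w\in[F,F]$ and $k$ is even, say $k=2g_0$), $b_2(G;\mathbb{Q})=1$, and the cup form $\eta$ on $H^1(G;\mathbb{Q})\cong\mathbb{Q}^{2g_0}$ is non-zero. Now $H^2(G;\mathbb{Q})=\mathbb{Q}$ is spanned by cup products, hence is a rank-one sub-Hodge structure of $H^2(X;\mathbb{Q})$, so of type $(1,1)$; therefore $H^0(X,\Omega^1_X)\subset H^1(G;\mathbb{C})$, which cups into the $(2,0)$-part, is totally isotropic for $\eta$, of dimension $g_0$.

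If $g_0\ge 2$, the isotropic subspace theorem of Castelnuovo--de Franchis (in Catanese's form) produces a surjective holomorphic map with connected fibres $f\colon X\to\mathcal C$ onto a compact orbifold Riemann curve of genus $g_0$, the orbifold structure recording the multiple fibres, with $f^*H^0(\mathcal C,\Omega^1_{\mathcal C})=H^0(X,\Omega^1_X)$. If $g_0=1$, the Albanese image is already an elliptic curve and Stein factorisation gives such an $f$ with $g(\mathcal C)=1$ (the missing first-cohomology classes would otherwise come from fibre cohomology). In either case $f$ induces a surjection $G\to\Gamma:=\pi_1^{\mathrm{orb}}(\mathcal C)$; since $b_1(\Gamma)=2g_0=b_1(G)$ it is an isomorphism on $H^1(-;\mathbb{Q})$, and as $H^2$ is spanned by cup products on both sides it is an isomorphism on $H^2(-;\mathbb{Q})$, carrying $\eta$ to the cup form of the genus-$g_0$ orbifold curve, the standard symplectic form; in particular $\eta$ is non-degenerate. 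The kernel $N$ of $G\to\Gamma$ is the image of the fundamental group of a general fibre, hence finitely generated and normal; moreover, choosing the orbifold structure on $\mathcal C$ maximally (recording all multiplicities of the fibration) one arranges that $N$ is torsion-free.

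The crux is to show $N=1$; this is the step where the real work lies. Granting it, $\Gamma\cong G$ is a one-relator group; but the orbifold fundamental group of a curve with at least two cone points has at least two non-conjugate maximal finite cyclic subgroups, whereas in a one-relator group all finite subgroups are conjugate into $\langle w\rangle$ (Karrass--Magnus--Solitar; see \cite{ls}), so $\mathcal C$ has at most one cone point and $G\cong\Gamma=\Gamma_{g_0,n}$, with $n$ the order of that cone point (equivalently $n=m$, and $n=1$ in the torsion-free case). To obtain $N=1$ in the torsion-free case ($m=1$): the presentation $2$-complex is aspherical (Lyndon; see \cite{ls}), so $\mathrm{cd}(G)=2$ and $G$ is $FP$; infinite K\"ahler groups are one-ended, so $H^i(G;\mathbb{Z}G)=0$ for $i\ne 2$, and since $H^2(G;\mathbb{Q})=\mathbb{Q}$, Farrell's theorem on $H^2(-;\mathbb{Z}G)$ forces $H^2(G;\mathbb{Z}G)\cong\mathbb{Z}$, so $G$ is a $PD_2$-group and hence a surface group $\pi_1(\Sigma_{g_0})$ (Eckmann--M\"uller--Linnell); a surface group of genus $\ge 2$ has no non-trivial finitely generated normal subgroup of infinite index, so $N=1$. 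In the torsion case ($m\ge 2$), $G$ is virtually torsion-free with $\mathrm{vcd}(G)=2$, and one runs the same argument after passing to a finite index torsion-free subgroup (using $N$ torsion-free so that the kernel of the restricted map is a finitely generated normal subgroup of a surface group), together with an Euler-characteristic count to pin down the genus; the low cases $g_0=1$ (where $\Gamma$ is $\mathbb{Z}^2$ or $\Gamma_{1,n}$) are small and handled directly. The genuinely delicate point throughout is precisely the passage from a \emph{surjection} onto the orbifold curve group to an \emph{isomorphism} --- ruling out the possibility that $G$ is a strictly larger K\"ahler group fibring over the same base orbifold --- and it is there that one must invoke the $PD_2$-classification together with the finiteness properties of one-relator groups and the structure of the orbifold fibration; everything preceding it is soft (Hodge theory, Castelnuovo--de Franchis, and a deficiency/Euler-characteristic count).
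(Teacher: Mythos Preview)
Your ``if'' direction is correct and is a nice alternative to the paper's construction (which uses elliptic surfaces with controlled multiple fibres, via Xiao's computation of $\pi_1$). Your quotient $(\Sigma'\times Y)/F$ works for the reason you give: with $Z(\pi_1(\Sigma'))=1$ the extension is determined by the outer action, which is the geometric one.

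The ``only if'' direction, however, has a genuine gap at exactly the point you identify as delicate. Your claim that a torsion-free one-relator K\"ahler group is $PD_2$ rests on the assertion that one-endedness, $\mathrm{cd}(G)=2$, type $FP$, and $H^2(G;\mathbb Q)=\mathbb Q$ together with ``Farrell's theorem'' force $H^2(G;\mathbb Z G)\cong\mathbb Z$. No such theorem exists, and the implication is false already for groups satisfying all your non-K\"ahler hypotheses: take $G=\langle a,b\mid [a,b^n]\rangle$ with $n\ge 2$. This is a torsion-free one-relator group with aspherical $2$-complex, hence $\mathrm{cd}(G)=2$ and $G$ of type $FP$; the relator has zero exponent sums, so $b_1=2$ and $H_2(G;\mathbb Z)=\mathbb Z$; it has centre $\langle b^n\rangle\cong\mathbb Z$, so it is not a nontrivial free product and is therefore one-ended; yet it is \emph{not} $PD_2$ (a $PD_2$ group is a surface group, and a non-abelian surface group has trivial centre). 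Thus your $PD_2$ step does not go through, and the remainder of your argument (showing $N=1$ by appealing to the absence of f.g.\ infinite-index normal subgroups in surface groups) collapses with it. Note that this example is even free-by-$\mathbb Z$ (kernel of $b\mapsto 1,\ a\mapsto 0$ is free of rank $n$), so it is precisely one of the groups the argument must exclude.

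The paper's route is entirely different. It never attempts to prove $PD_2$ directly. Instead it uses the Magnus--Moldavansky description of $G$ as an HNN extension and analyses the Bass--Serre tree: if the tree is not a line, Delzant--Gromov's theory of cuts produces a surjection to a surface group with finite kernel; if it is a line (or after Karrass--Solitar in the residual case), $G$ is virtually free-by-$\mathbb Z$. That case is then disposed of by a coherence argument: Feighn--Handel shows free-by-$\mathbb Z$ groups are coherent, so the kernel of the induced fibration onto a curve is finitely \emph{presented}, hence free by Bieri's theorem on normal subgroups in $2$-dimensional groups, and a cohomological-dimension count ($1+2=3\ne 2$) gives a contradiction unless the free kernel has rank one. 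The torsion case is then handled via Fischer--Karrass--Solitar and the convergence-group theorem. If you want to salvage a cohomological approach avoiding Bass--Serre/Delzant--Gromov, the correct replacement for your $PD_2$ step is an $\ell^2$-Betti-number argument \`a la Kotschick (cited in the paper as \cite{kotschick2}); ordinary cohomology with $\mathbb Q$ or $\mathbb Z G$ coefficients is not enough.
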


We show that each of the groups
$$
\langle a_1\, ,b_1\, , \,\cdots\, ,a_g\, 
,b_g\, \mid\, (\prod_{i=1}^g [a_i\, ,b_i])^n \rangle\, ,\, ~
g\, , n\, >\, 0,
$$
can in fact be realized as the fundamental group of a smooth 
complex projective variety.

It is known that every finite group is the fundamental group   
of some smooth complex projective variety (see \cite[p. 6, 
Example 1.11]{abckt}) and so all finite groups
are K\"ahler. Since the only finite one-relator groups are finite cyclic
groups, it follows that
finite one-relator K\"ahler groups are precisely the finite cyclic groups.

Therefore, Theorem \ref{th-i-a} has the following corollary:

\begin{cor}
Any torsion-free one-relator K\"ahler group is the fundamental
group of some closed orientable surface.
\end{cor}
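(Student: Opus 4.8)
The plan is to read the corollary directly off Theorem \ref{th-i-a}, the only extra ingredient being the classical torsion theory of one-relator groups. First I would dispose of the finite case: the only finite one-relator groups are the finite cyclic groups (as recalled above), the only torsion-free one among these is the trivial group, and the trivial group is the fundamental group of the $2$-sphere, a closed orientable surface. So from now on I may assume that $G$ is an \emph{infinite} torsion-free one-relator K\"ahler group.

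By Theorem \ref{th-i-a}, such a $G$ is isomorphic to
$$
G_{g,n}\ =\ \langle a_1\, ,b_1\, ,\,\cdots\, ,a_g\, ,b_g\, \mid\, (\textstyle\prod_{i=1}^g [a_i\, ,b_i])^n\rangle
$$
for some positive integers $g$ and $n$. When $n=1$ this is precisely the standard one-relator presentation of the fundamental group of the closed orientable surface of genus $g$, so it suffices to show that torsion-freeness forces $n=1$; equivalently, that $G_{g,n}$ contains an element of finite order whenever $n\ge 2$.

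This last claim is the only substantive step, and I would argue it in two stages. First, the word $w\, =\, \prod_{i=1}^g [a_i\, ,b_i]$ is not a proper power in the free group $F$ on $a_1,b_1,\ldots,a_g,b_g$: this is a classical property of surface relators, and can be seen, for instance, from the fact that $F/\langle\langle w\rangle\rangle$ is the genus-$g$ surface group, which has cohomological dimension $2$, whereas if $w$ were a proper power $u^m$ with $m\ge 2$ then this quotient would be a one-relator group whose relator is a proper power and would therefore contain torsion. Second, by the torsion theorem for one-relator groups (Karrass--Magnus--Solitar), if $R$ is not a proper power in a free group then the image of $R$ in $\langle X\mid R^n\rangle$ has order exactly $n$. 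Applying this with $R=w$ shows that $w$ has order $n\ge 2$ in $G_{g,n}$, contradicting the hypothesis that $G$ is torsion-free. Hence $n=1$ and $G$ is a closed orientable surface group.

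I do not expect a serious obstacle here: the whole content of the corollary is carried by Theorem \ref{th-i-a}. The only point that needs a little care is the assertion that $w$ is not a proper power, and should the justification above be felt to be circular one can instead argue geometrically, identifying $G_{g,n}$ with the orbifold fundamental group of the closed orientable $2$-orbifold of genus $g$ with a single cone point of order $n$; for $g\ge 1$ and $n\ge 2$ this orbifold has negative orbifold Euler characteristic, hence is developable, and so $G_{g,n}$ acts properly discontinuously on $\mathbb{H}^2$ with a cyclic point-stabilizer of order $n$ — again exhibiting torsion.
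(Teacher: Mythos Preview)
Your argument is correct and carries out exactly the derivation the paper announces when it says the corollary follows from Theorem~\ref{th-i-a}: reduce to $G\cong G_{g,n}$ and then rule out $n\ge2$ by exhibiting torsion via the Karrass--Magnus--Solitar torsion theorem, having checked that the surface relator $\prod_i[a_i,b_i]$ is not a proper power. The paper does not spell out this last step in the introduction, and your justification (surface groups are torsion-free, while a one-relator group whose relator is a proper power is not) is clean and not circular; the orbifold alternative you offer is also sound and aligns with the discussion in Section~\ref{sec-t}.

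It is worth noting that the paper in fact settles the torsion-free case \emph{before} reaching Theorem~\ref{th-i-a}, namely in Theorem~\ref{1relf}, and by a slightly different argument: having already shown that any one-relator K\"ahler group is \emph{virtually} a surface group, it simply invokes the general fact that a torsion-free group which is virtually a surface group is itself a surface group. That route avoids the explicit presentation $G_{g,n}$ and the one-relator torsion theorem, at the price of quoting a (standard) structural fact about virtual surface groups. Your route, by contrast, stays entirely inside one-relator group theory once Theorem~\ref{th-i-a} is granted, and makes the torsion in $G_{g,n}$ completely explicit. Both are short and valid; they just trade one classical ingredient for another.
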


We also prove the following closely related
result (see Corollary \ref{fxzcor2}):

\begin{theorem}
Let $G$ be a K\"ahler group such that
\begin{itemize}
\item it is a coherent group of rational cohomological 
dimension two, and

\item the virtual first Betti number of $G$ is positive.
\end{itemize}
Then $G$ is virtually a surface group.
\end{theorem}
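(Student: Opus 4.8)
The plan is to realize a finite-index subgroup of $G$ as the fundamental group of a compact K\"ahler manifold, use the Albanese map to force a holomorphic fibration over a Riemann surface, and then exploit the bound on rational cohomological dimension to show that the fibre subgroup is finite. First I would replace $G$ by a finite-index subgroup $G_1$ with $b_1(G_1)>0$ (such a subgroup exists since the virtual first Betti number of $G$ is positive); it suffices to prove that $G_1$ is virtually a closed surface group, and all the hypotheses pass to $G_1$ --- it is K\"ahler (the corresponding finite cover of a compact K\"ahler manifold is again compact K\"ahler), coherent, and $\mathrm{cd}_{\mathbb{Q}}(G_1)\le 2$. Moreover, a coherent group of rational cohomological dimension at most $2$ has every finitely generated subgroup finitely presented, hence $FP_2$, hence (being of rational cohomological dimension at most $2$) $FP$ over $\mathbb{Q}$; so $G_1$ and each of its finitely generated subgroups are $FP$ over $\mathbb{Q}$.

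Next I would fix a connected compact K\"ahler manifold $X$ with $\dim_{\mathbb{C}}X=n$ and $\pi_1(X)\cong G_1$, and let $a\colon X\to A=\mathrm{Alb}(X)$ be the Albanese map. Since $\dim_{\mathbb{C}}A=\tfrac12 b_1(X)\ge 1$, the map $a$ is nonconstant; put $d=\dim_{\mathbb{C}}a(X)\ge 1$. The key point is that $d=1$. Suppose instead $d\ge 2$. The pullback $a^{*}\colon H^0(A,\Omega^1_A)\to H^0(X,\Omega^1_X)$ is an isomorphism, and because $a$ has generic rank $d\ge 2$ there are two holomorphic $1$-forms $\alpha_1,\alpha_2$ on $X$ with $\beta:=\alpha_1\wedge\alpha_2\not\equiv 0$. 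Fixing a K\"ahler form $\omega$ on $X$ and using $n\ge d\ge 2$, the $(2,2)$-form $\beta\wedge\overline{\beta}$ is non-negative and not identically zero, so $\int_X \beta\wedge\overline{\beta}\wedge\omega^{n-2}>0$; hence $[\alpha_1]\cup[\alpha_2]\cup[\overline{\alpha_1}]\cup[\overline{\alpha_2}]\ne 0$ in $H^4(X,\mathbb{C})$, and therefore $[\alpha_1]\cup[\alpha_2]\cup[\overline{\alpha_1}]\ne 0$ in $H^3(X,\mathbb{C})$. The classifying map $X\to K(G_1,1)$ induces an isomorphism on $H^1$ and a ring homomorphism on cohomology, so the triple cup-product map $\Lambda^3 H^1(G_1,\mathbb{C})\to H^3(G_1,\mathbb{C})$ is nonzero; thus $H^3(G_1,\mathbb{Q})\ne 0$, contradicting $\mathrm{cd}_{\mathbb{Q}}(G_1)\le 2$. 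Hence $d=1$.

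Since $a(X)$ is a curve, classical arguments (normalizing $a(X)$, passing to a Stein factorization, and a Riemann--Hurwitz count using that the Jacobian of the normalization surjects onto $A$) produce a surjective holomorphic map $f\colon X\to B$ with connected fibres onto a smooth projective curve $B$ of genus $\tfrac12 b_1(X)\ge 1$, through which $a$ factors. Recording the multiplicities of the finitely many multiple fibres of $f$ endows $B$ with a structure of compact orbifold Riemann surface $B^{\mathrm{orb}}$, and one obtains a short exact sequence $1\to N\to G_1\to Q\to 1$, where $Q=\pi_1^{\mathrm{orb}}(B^{\mathrm{orb}})$, the map $G_1\to Q$ is induced by $f$, and $N$ is the image in $G_1$ of the fundamental group of a general fibre (so $N$ is finitely generated, hence $FP$ over $\mathbb{Q}$ by coherence). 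As $g(B)\ge 1$, the group $Q$ is either $\mathbb{Z}^2$ (when $B^{\mathrm{orb}}$ is a torus) or virtually a closed hyperbolic surface group (when $\chi^{\mathrm{orb}}(B^{\mathrm{orb}})<0$); in both cases $Q$ is, up to finite index, a $PD_2$-group over $\mathbb{Q}$, and is virtually a closed surface group. Bieri's formula for the cohomological dimension of an extension of a group by a duality group (applied after replacing $Q$ by a finite-index $PD_2$-subgroup and $G_1$ by the corresponding finite-index subgroup) gives $\mathrm{cd}_{\mathbb{Q}}(G_1)=\mathrm{cd}_{\mathbb{Q}}(N)+2$; since $\mathrm{cd}_{\mathbb{Q}}(G_1)\le 2$ this forces $\mathrm{cd}_{\mathbb{Q}}(N)=0$, i.e.\ $N$ is finite. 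Thus $G_1$ is a finite extension of the virtually-surface group $Q$; surface groups are residually finite, hence so is $G_1$, and choosing a finite-index subgroup $H\le G_1$ with $H\cap N=\{1\}$, the composite $H\hookrightarrow G_1\to Q$ is injective with finite-index image, so $H$ is isomorphic to a finite-index subgroup of $Q$ and is therefore itself virtually a closed surface group. Consequently $G_1$, and with it $G$, is virtually a surface group.

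The step I expect to be the main obstacle is the proof that $d=1$, i.e.\ that the Albanese image has dimension $\le 1$: this is where the hypotheses genuinely interact, combining Hodge-theoretic positivity with the ring structure of $H^{*}(X,\mathbb{C})$ and the bound $\mathrm{cd}_{\mathbb{Q}}(G_1)\le 2$. Everything afterwards is comparatively routine: constructing the fibration over a curve is Castelnuovo--de Franchis type input, and passing from ``finite-by-(virtually surface)'' to ``virtually a surface group'' uses only Bieri's dimension formula, the virtual torsion-freeness of orbifold surface groups, and residual finiteness.
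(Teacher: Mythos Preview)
Your strategy is essentially the paper's own. The argument that the Albanese image has complex dimension one (your ``$d=1$'' step) is precisely what the paper packages as Lemma~\ref{alb}, attributed to Catanese and Kotschick; your cup-product computation is the standard proof of that lemma. The subsequent Stein factorization, the use of coherence to make the fibre subgroup finitely presented and hence $FP$ over $\mathbb{Q}$, and the Bieri-type dimension count forcing $cd_{\mathbb{Q}}(N)=0$ all mirror the proof of Corollary~\ref{fxzcor2}. One cosmetic difference: the paper first passes (via \cite{cat-fib}) to a further finite cover eliminating multiple fibres, so that the quotient is already a torsion-free surface group, whereas you keep the orbifold structure on the base and work with $\pi_1^{\mathrm{orb}}$. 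Either route is fine.

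There is, however, one genuine slip in your last paragraph. From $1\to N\to G_1\to Q\to 1$ with $N$ finite and $Q$ virtually a surface group you assert ``surface groups are residually finite, hence so is $G_1$'' and then choose a finite-index $H\le G_1$ with $H\cap N=\{1\}$. But an extension of a residually finite group by a finite normal subgroup need \emph{not} be residually finite: Deligne's central extension of $\mathrm{Sp}_{2n}(\mathbb{Z})$ by $\mathbb{Z}/2$ is the standard counterexample. So the stated reason does not justify the step. What actually rescues the argument is a feature specific to surface groups: after passing to a finite-index subgroup so that the outer action on $N$ is trivial, the extension is governed by a class in $H^2(Q_0,Z(N))$ with $Q_0$ a closed surface group, and restriction to a subgroup of index $k$ multiplies this class by $k$; taking $k$ divisible by $|Z(N)|$ kills the class, so the extension virtually splits. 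Equivalently, surface groups are \emph{good} in Serre's sense, which does imply that finite extensions of them are residually finite. With this correction your proof goes through; the paper's route, by arranging the quotient torsion-free from the outset and invoking Theorem~\ref{cd2normal} to make $N$ free (hence trivial once the dimension bound bites), sidesteps the issue.
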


We give an overview of the basic strategy of the proof:\\
It follows from the structure theory of one-relator groups that they are described as iterated HNN extensions.
The K\"ahler group $G$ we are interested in therefore acts on the Bass-Serre tree $T$ associated to the HNN splitting.
If $T$ is not quasi-isometric to the real line, it must be non-amenable and have infinitely many ends. It follows
from a refinement (Proposition \ref{bmst}) of the theory of stable cuts of Delzant-Gromov \cite{dg} using further 
structure of one-relator groups that $G$ is virtually a surface group in this case.

In case $T$ is quasi-isometric to the real line, then $G$ must be the mapping torus of a free group. These groups
are known to be coherent \cite{fh}. A simple cohomological dimension argument along with the structure of finitely
presented normal subgroups of cd 2 groups completes the proof in this case.

The torsion in $G$ is finally handled by further structure theory of one-relator groups.

\section{Preliminaries}

The reader is referred to Ch. VIII.10 of \cite{brown} for 
generalities on duality and Poincar\'e duality groups.

\begin{defn} \label{poincarebis}
A group $G$ is a {\it Poincar\'e duality group of dimension}
$n$ if $G$ is of type $FP$, and $$H^i(G, \ZG) \,= \,\begin{cases}
0, \quad \text{for} \; i \,\neq\, n\, , \\
\Z, \quad \text{for} \; i \,=\,n\, .
\end{cases}
$$

A group $G$ is a {\it duality group of dimension} $n$ if $G$
is of type $FP$, $$H^i(G, \ZG) \,=\, 0\, \, \quad \text{for} 
\; i \neq n\, ,$$ 
and $H^n(G, \, \ZG)$ is a non-zero torsion-free abelian group.
\end{defn}

Two facts about such groups that we will need are given below;
cohomological dimension is denoted by ``cd''.

\begin{theorem}[{\cite[Theorem 3.5]{be}}]\label{cdplus}
Let $1\,\longrightarrow\, N\, \longrightarrow\, G\,
\longrightarrow\, Q\, \longrightarrow\, 1$ be an exact sequence
of duality groups. Then $cd(G) = cd(Q) + cd(N)$.
\end{theorem}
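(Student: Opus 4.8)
The plan is to prove the two inequalities $cd(G)\le cd(N)+cd(Q)$ and $cd(G)\ge cd(N)+cd(Q)$ separately; throughout write $n=cd(N)$ and $q=cd(Q)$. The upper bound needs no duality hypothesis: in the Lyndon--Hochschild--Serre spectral sequence $E_2^{i,j}=H^i(Q,\,H^j(N,\,M))\Longrightarrow H^{i+j}(G,\,M)$ one has $E_2^{i,j}=0$ whenever $i>q$ or $j>n$, so $H^k(G,\,M)=0$ for all $k>n+q$ and all $M$, giving $cd(G)\le n+q$. (Since $N$ and $Q$ are of type $FP$, so is $G$, although this is not needed for the equality of dimensions.)

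For the reverse inequality it is enough to exhibit a single coefficient module with nonvanishing degree-$(n+q)$ cohomology of $G$, and the natural candidate is $M=\ZG$. In the spectral sequence above the corner term $E_2^{q,n}=H^q(Q,\,H^n(N,\,\ZG))$ receives and emits no nonzero differential: any $d_r$ out of it lands in bidegree $(q+r,\,n-r+1)$ with $q+r>q$, hence in a zero group, and any $d_r$ into it comes from $(q-r,\,n+r-1)$ with $n+r-1>n$, again a zero group. Thus $E_2^{q,n}=E_\infty^{q,n}$, and as $E_2^{i,j}$ with $i+j=n+q$ forces $(i,j)=(q,n)$, we get $H^{n+q}(G,\,\ZG)\cong H^q(Q,\,H^n(N,\,\ZG))$. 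It remains to show that the right-hand side is nonzero.

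Restricting $\ZG$ to $N$ gives a free $\mathbb{Z}N$-module, $\ZG\cong\bigoplus_{Nt\,\in\,N\backslash G}\mathbb{Z}[Nt]$, and the residual $G$-action permutes these rank-one summands through the simply transitive action of $Q$ on $N\backslash G$. Since $N$ is of type $FP$, $H^n(N,-)$ commutes with this direct sum, so $H^n(N,\,\ZG)\cong\bigoplus_{N\backslash G}D_N$ with $D_N=H^n(N,\,\mathbb{Z}N)$ the dualizing module of $N$ (nonzero and torsion-free by hypothesis), and a normalization of the summand identifications turns the $Q$-action into the regular permutation action; that is, $H^n(N,\,\ZG)\cong\mathbb{Z}Q\otimes_{\mathbb{Z}}D_N$ with $Q$ acting on the first factor alone. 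Finally, using that $Q$ is of type $FP$ one computes $H^q(Q,-)$ from a resolution of $\mathbb{Z}$ by finitely many finitely generated projective $\mathbb{Z}Q$-modules $P_\bullet$: the natural isomorphism $\mathrm{Hom}_{\mathbb{Z}Q}(P_\bullet,\,\mathbb{Z}Q\otimes_{\mathbb{Z}}D_N)\cong\mathrm{Hom}_{\mathbb{Z}Q}(P_\bullet,\,\mathbb{Z}Q)\otimes_{\mathbb{Z}}D_N$ (each $P_i$ being finitely generated projective) together with exactness of $-\otimes_{\mathbb{Z}}D_N$ (the abelian group $D_N$ is flat, being torsion-free over $\mathbb{Z}$) yields $H^q(Q,\,\mathbb{Z}Q\otimes_{\mathbb{Z}}D_N)\cong H^q(Q,\,\mathbb{Z}Q)\otimes_{\mathbb{Z}}D_N=D_Q\otimes_{\mathbb{Z}}D_N$, where $D_Q=H^q(Q,\,\mathbb{Z}Q)\neq 0$. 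A tensor product over $\mathbb{Z}$ of two nonzero abelian groups one of which is torsion-free is itself nonzero, so $H^{n+q}(G,\,\ZG)\neq 0$, whence $cd(G)\ge n+q$ and the proof is complete. I expect the delicate points to be the description of $H^n(N,\,\ZG)$ as a $\mathbb{Z}Q$-module --- in particular the normalization of the summand identifications that kills any automorphism twist on $D_N$ --- and the interchange of $H^q(Q,-)$ with the tensor factor $D_N$; the spectral sequence bookkeeping and the inequality arguments are routine.
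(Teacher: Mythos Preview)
The paper does not supply its own proof of this theorem; it is quoted from Bieri--Eckmann and used as a black box. Your argument is essentially the original Bieri--Eckmann proof: the Lyndon--Hochschild--Serre spectral sequence with coefficients in $\ZG$, survival of the corner term $E_2^{q,n}$, the identification of $H^n(N,\ZG)$ with $D_N\otimes_{\Z}\Z Q$, and the final computation $H^q(Q,\,D_N\otimes_{\Z}\Z Q)\cong D_N\otimes_{\Z}D_Q$. So there is nothing in the paper to compare against, and your route matches the cited source.

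There is one small but genuine slip. Your closing claim that ``a tensor product over $\Z$ of two nonzero abelian groups one of which is torsion-free is itself nonzero'' is false as stated: take $\mathbb{Q}\otimes_{\Z}\Z/2\Z=0$. What you actually have, and what suffices, is that \emph{both} $D_N$ and $D_Q$ are nonzero and torsion-free---this is part of the definition of a duality group. With both factors torsion-free one embeds $\Z\hookrightarrow D_Q$ and, using flatness of the torsion-free group $D_N$, obtains $D_N\cong D_N\otimes_{\Z}\Z\hookrightarrow D_N\otimes_{\Z}D_Q$, so the tensor product is nonzero. With this correction the argument is complete. Your flagged delicate point about the $Q$-module structure of $H^n(N,\ZG)$ is indeed the crux: the honest identification is $H^n(N,\ZG)\cong \overline{D_N}\otimes_{\Z}\Z Q$ with the diagonal $Q$-action ($\overline{D_N}$ carrying the action induced by conjugation), and the ``normalization'' you invoke is the standard untwisting isomorphism $M\otimes_{\Z}\Z Q\cong M^{\mathrm{triv}}\otimes_{\Z}\Z Q$ given on basis elements by $m\otimes x\mapsto x^{-1}m\otimes x$; after this your final tensor computation applies verbatim.
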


We shall occasionally refer to a group of cohomological dimension $n$ 
as an $n$--dimensional group.

\begin{theorem}[{\cite[Theorem B]{bi}}]\label{cd2normal}
Let $G$ be a two-dimensional group, and let $H\, \subset \, G$
be a finitely presented normal subgroup of infinite index.
Then $H$ is free.
\end{theorem}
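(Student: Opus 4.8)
The plan is to prove that $\mathrm{cd}(H)\le1$, so that $H$ is free by the Stallings--Swan theorem; the finiteness hypothesis on $H$ will enter through a duality argument and the hypothesis $\mathrm{cd}(G)=2$ through the vanishing of $H^{\,i}(G,-)$ for $i>2$. Since $H\subset G$ we have $\mathrm{cd}(H)\le\mathrm{cd}(G)=2$, and as $H$ is finitely presented with $\mathrm{cd}(H)\le2$ it is of type $FP$ (the second syzygy in any partial free resolution by finitely generated modules is already projective). If $\mathrm{cd}(H)\le1$ we are done, so suppose $\mathrm{cd}(H)=2$ and seek a contradiction. If $H$ is not one-ended then, being torsion-free, it is either free --- and we are done --- or a nontrivial free product, and by Grushko's theorem $H\cong H_1\ast\cdots\ast H_k\ast F_r$ with each $H_i$ one-ended, finitely presented and, being non-free, of cohomological dimension exactly $2$. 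The group $G$ acts on the Bass--Serre tree of this canonical decomposition; if $G$ fixes a vertex or an end then so does $H$, and since a nontrivial free product fixes no end of its own tree and no proper free factor is the stabiliser of a vertex, this forces $H$ to be infinite cyclic or properly smaller than itself, contradicting either $\mathrm{cd}(H)=2$ or the nontriviality of the splitting; otherwise some vertex stabiliser $G_v$ has infinite index in $G$, contains the corresponding factor $H_i$ as a normal subgroup of infinite index, and has $\mathrm{cd}(G_v)\le2$, so replacing $(G,H)$ by $(G_v,H_i)$ we may assume $H$ is one-ended. Then $H^{\,i}(H,\mathbb{Z}H)=0$ for $i=0$ ($H$ infinite), $i=1$ ($H$ one-ended) and $i\ge3$ ($\mathrm{cd}\,H=2$); by the Bieri--Eckmann criterion $H$ is a duality group of dimension $2$ with dualising module $D:=H^{2}(H,\mathbb{Z}H)$, a non-zero torsion-free abelian group.

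\emph{The spectral sequence.} Put $Q:=G/H$, which is infinite by hypothesis. As a left $\mathbb{Z}H$-module $\ZG$ is free, so $H^{\,i}(H,\ZG)\cong\bigoplus H^{\,i}(H,\mathbb{Z}H)$ vanishes for $i\ne2$, and the Lyndon--Hochschild--Serre spectral sequence of $1\to H\to G\to Q\to1$ with coefficients in $\ZG$ collapses onto the row $q=2$, giving
$$H^{\,p+2}(G,\ZG)\ \cong\ H^{\,p}\!\bigl(Q,\,H^{2}(H,\ZG)\bigr),\qquad p\ge0 .$$
Since $H$ is of type $FP$ with $\mathrm{cd}(H)=2$ the functor $H^2(H,-)$ is right exact and commutes with direct sums, whence $H^{2}(H,\ZG)\cong D\otimes_{\mathbb{Z}H}\ZG$; as $\ZG$ is $\mathbb{Z}H$-free on the coset set $H\backslash G\cong Q$, this is, as a $\mathbb{Z}Q$-module, the module induced from the trivial subgroup on the underlying abelian group of $D$, and in particular $H^{2}(H,\ZG)^{Q}=0$ because $Q$ is infinite. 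From $\mathrm{cd}(G)=2$ we therefore read off $H^{\,p}\bigl(Q,H^{2}(H,\ZG)\bigr)=0$ for all $p\ge1$, and at $p=0$ that $H^{2}(G,\ZG)\cong H^{2}(H,\ZG)^{Q}=0$. But a group of type $FP$ of cohomological dimension $2$ has $H^{2}(G,\ZG)\ne0$, which is the desired contradiction; hence $\mathrm{cd}(H)\le1$ and $H$ is free.

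\emph{Main obstacles.} Two points carry the weight. The first is the reduction to the one-ended case: one must check that conjugation in $G$ preserves the Grushko decomposition of $H$ (so that $G$ does act on its Bass--Serre tree) and dispose of the awkward sub-case of a vertex stabiliser $G_v$ with $[G_v:H_i]<\infty$. The second is the passage ``$\mathrm{cd}(G)=2\Rightarrow H^{2}(G,\ZG)\ne0$'': this is immediate when $G$ is of type $FP$, which is the case in every intended application --- in particular when $G$ is a K\"ahler group, being finitely presented of finite cohomological dimension --- but in full generality it requires the finer homological algebra of $\mathbb{Z}H$-modules that underlies Bieri's treatment, and that is where the real work lies.
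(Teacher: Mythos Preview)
The paper does not give its own proof of this statement: Theorem~\ref{cd2normal} is simply quoted from Bieri \cite[Theorem~B]{bi}, so there is no in-paper argument to compare against. What follows is therefore an assessment of your sketch on its own terms.

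Your overall strategy---reduce to $\mathrm{cd}(H)\le 1$ via the Lyndon--Hochschild--Serre spectral sequence and then invoke Stallings--Swan---is exactly Bieri's, and the core computation in the one-ended case is correct: once $H^{q}(H,\Z H)=0$ for $q\ne 2$ the spectral sequence with coefficients $\ZG$ collapses to the row $q=2$, the $Q$-invariants of $H^{2}(H,\ZG)$ vanish because $Q$ is infinite and the module is a direct sum permuted regularly, and hence $H^{2}(G,\ZG)=0$.

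There are, however, two genuine gaps that you yourself flag but do not close. First, the reduction to the one-ended case is not rigorous as written. You assert that $G$ acts on the Bass--Serre tree of the Grushko decomposition of $H$; but that decomposition is canonical only up to conjugacy and symmetry of the factors, and there is no single canonical tree carrying a $G$-action extending the $H$-action without further argument. Even granting such an action, the sub-case $[G_v:H_i]<\infty$ is left open, and the dichotomy ``$G$ fixes a vertex or an end, or some $G_v$ has infinite index'' is not argued. Bieri avoids this entirely: his argument does not pass through one-endedness of $H$ but works directly with the top-degree cohomology $H^{n}(H,-)$ for $n=\mathrm{cd}(H)$ and a more careful module-theoretic analysis.

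Second, and more seriously for the theorem as stated, the implication ``$\mathrm{cd}(G)=2\Rightarrow H^{2}(G,\ZG)\ne 0$'' genuinely requires $G$ to be of type $FP$, and Bieri's theorem makes no such assumption on $G$. You are right that this is harmless for every use the present paper makes of the result (in Proposition~\ref{fxz} the ambient group is a finitely presented free-by-$\Z$ group, hence $FP$), but it means your argument proves a weaker statement than the one cited. Bieri's original proof circumvents this by showing directly that if $N$ is $FP$ with $\mathrm{cd}(N)=n$ and $[G:N]=\infty$ then $\mathrm{cd}(G)\ge n+1$, without ever computing $H^{*}(G,\ZG)$.
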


The reader is referred to \cite{dg} for the notion of cuts, 
particularly stable cuts, in K\"ahler groups. 

\begin{defn} Let $Q$ be a group and $R$ a subgroup of it with the 
following property. If $M$ is a Riemannian manifold with 
fundamental group $Q$ and $ M'$  is the cover of $M$ 
corresponding to $R$, then $M'$ has at least three ends and
is a non-amenable metric space.
We then say that $R$ is a {\it cut subgroup} of $Q$.
\end{defn}

A group $G$ is said to be {\it virtually a surface group} if some 
finite index subgroup of $G$ is the fundamental group of a closed
surface of positive first Betti number.

We shall need the following:

\begin{prop}[\cite{bms}]\label{bmst}
Let  $G$ be a  K\"ahler group fitting in a short  exact sequence  
$$1\,\longrightarrow\, N \,\longrightarrow\, G\,
\longrightarrow \,Q \,\longrightarrow\, 1\, ,$$
where $N$ is finitely generated, and one of the following holds:
\begin{itemize}
\item $Q$ admits a discrete faithful non-elementary minimal 
action on a simplicial tree with more than two ends;

\item $Q$ admits a (strong-stable) cut $R$ such that the 
intersection of all conjugates of $R$ is trivial.
\end{itemize} 
Then $G$ is virtually a surface group. 
\end{prop}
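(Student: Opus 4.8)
The plan is to reduce the statement that $G$ satisfies Proposition~\ref{bmst}'s hypotheses to known structure theory of one-relator groups and then invoke that proposition. We are asked to show that if $G$ is K\"ahler and fits in $1 \to N \to G \to Q \to 1$ with $N$ finitely generated and either (i) $Q$ admits a discrete faithful non-elementary minimal action on a simplicial tree with more than two ends, or (ii) $Q$ has a strong-stable cut $R$ with $\bigcap_{g\in Q} gRg^{-1} = \{1\}$, then $G$ is virtually a surface group. First I would recall the Delzant--Gromov machinery of \cite{dg}: a non-amenable, at-least-three-ended cut in a K\"ahler group forces the existence of a surjection (with finitely generated kernel) onto the fundamental group of a hyperbolic $2$-orbifold, via a suitable pluriharmonic map / fibration onto a Riemann surface. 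The point of Proposition~\ref{bmst} is to combine this with the normal subgroup $N$, so the main work is a lift-and-intersect argument.

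The key steps, in order, would be as follows. First, in case~(ii), pull back the cut $R \subset Q$ to its preimage $\widetilde R \subset G$; since $N$ is finitely generated (hence acts cocompactly on the relevant covers) and $\widetilde R / N \cong R$, the subgroup $\widetilde R$ is again a cut subgroup of $G$ — the covering space of a K\"ahler manifold with fundamental group $G$ corresponding to $\widetilde R$ has the same number of ends and the same (non-)amenability as the one for $R$, because the $N$-cover is a quasi-isometry onto the base when $N$ is f.g.\ and one only cares about coarse connectivity at infinity. Second, observe that $\bigcap_{g\in G} g\widetilde R g^{-1}$ is the preimage in $G$ of $\bigcap_{q\in Q} qRq^{-1} = \{1\}$, hence equals $N$, which is finitely generated but not necessarily trivial — so I cannot directly apply \cite{dg} to $G/N = Q$. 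Instead, apply the Delzant--Gromov stable-cut theorem to the K\"ahler group $G$ with its cut $\widetilde R$: this yields a surjection $\phi \colon G \to \pi_1^{\mathrm{orb}}(\Sigma)$ onto a hyperbolic orbifold surface group with finitely generated kernel $K$. Third, analyze the composite $N \to G \to \pi_1^{\mathrm{orb}}(\Sigma)$: either $N$ maps to a finite-index (hence $2$-dimensional, non-free) subgroup, or it maps with infinite-index image; a Euler characteristic / cohomological dimension count (using that $K$ is f.g.\ and $\pi_1^{\mathrm{orb}}(\Sigma)$ has a surface subgroup of finite index) shows the map $G \to \pi_1^{\mathrm{orb}}(\Sigma)$ must in fact have finite kernel, so $G$ is commensurable with a surface group. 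Case~(i) is handled the same way after recalling (from \cite{dg}, or from Bass--Serre theory) that a discrete faithful non-elementary minimal action on a simplicial tree with $>2$ ends produces such a cut subgroup in $Q$, which one then pulls back exactly as above.

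I would expect the main obstacle to be the passage from ``$G$ has a stable cut $\widetilde R$'' to ``$G$ is \emph{virtually} a surface group'' rather than merely ``$G$ surjects onto a surface-orbifold group with f.g.\ kernel.'' Controlling the kernel $K$ requires knowing that it cannot be an infinite f.g.\ group — here one uses that $G$, being itself closely tied to a one-relator group, has cohomological dimension at most $2$ over $\mathbb{Q}$ (or that $N$ is f.g.), so Theorem~\ref{cd2normal} applies to the normal subgroup $K \subset G$ and forces $K$ to be free; then the extension $1 \to K \to G \to \pi_1^{\mathrm{orb}}(\Sigma) \to 1$ of duality groups and Theorem~\ref{cdplus} force $\mathrm{cd}(K) = 0$, i.e.\ $K$ is trivial up to finite index, giving the conclusion. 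The delicate point is checking that the finite-generation hypothesis on $N$ really does survive to make $\widetilde R$ a genuine cut of $G$ with the right intersection property; this is the technical heart and is presumably where the cited paper \cite{bms} does its real work, so in a self-contained write-up I would spell out the covering-space argument that ends and amenability are preserved under the $N$-cover quasi-isometry.
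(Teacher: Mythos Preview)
The paper does not prove Proposition~\ref{bmst}; it is quoted verbatim from \cite{bms} and used as a black box, so there is no in-paper argument to compare against.

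Independently of that, your sketch has a genuine gap. Proposition~\ref{bmst} is a statement about an \emph{arbitrary} K\"ahler group $G$ sitting in $1\to N\to G\to Q\to 1$ with $N$ finitely generated and $Q$ admitting a suitable cut or tree action. Nowhere is it assumed that $G$ is, or is commensurable with, a one-relator group, and in particular there is no bound on $\mathrm{cd}(G)$. Your endgame---``$G$, being itself closely tied to a one-relator group, has cohomological dimension at most $2$ \ldots\ so Theorem~\ref{cd2normal} applies \ldots\ then Theorem~\ref{cdplus} forces $\mathrm{cd}(K)=0$''---imports a hypothesis that is not there. For a general K\"ahler $G$ the kernel $K$ of the Delzant--Gromov map can perfectly well be an infinite finitely generated group, and nothing you have written rules that out. (Your opening sentence is also circular: you say you will ``invoke that proposition'' to prove the proposition itself.)

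The part of your outline that is on the right track is the reduction: pulling $R$ back to $\widetilde R\subset G$, checking that $\widetilde R$ is again a cut (this is a Schreier-graph/quasi-isometry argument and, incidentally, does not really use that $N$ is finitely generated), applying \cite{dg} to obtain a surjection $\phi\colon G\twoheadrightarrow \pi_1^{\mathrm{orb}}(\Sigma)$ with kernel $K$ contained in every conjugate of $\widetilde R$, hence $K\subset N$, and then studying the finitely generated normal subgroup $N/K$ of the hyperbolic orbifold group $\pi_1^{\mathrm{orb}}(\Sigma)$. But the step from there to ``$G$ is virtually a surface group'' is exactly where the finite-generation of $N$ and further K\"ahler input from \cite{bms} are needed; it is not a cohomological-dimension count on $G$.
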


\section{One-relator Groups}

All one-relator groups will be assumed to be 
infinite and the generating set will be assumed to be finite
of cardinality greater than one. 
Our starting point is the following lemma due to Arapura.

\begin{lemma}[{\cite[Section 7J]{arapura-exp}}]\label{ara}
Suppose that $G \,=\, \langle x_1 \, , x_2 \, , \cdots \, , 
x_n\,\mid\, w(x_1, \cdots , x_n ) \rangle$ is a one-relator 
K\"ahler group with $n\, \geq \, 2$. Then the following statements 
hold:
\begin{enumerate}
\item $n$ is even.
\item Each $x_i$ occurs at least once in the word $w$, and the 
number of occurrences of $x_i$ and $x_i^{-1}$ coincide.
\item G surjects onto $\Gamma_g$ with $g = n/2$, where 
$\Gamma_g$ is the fundamental group of a closed
Riemann surface of genus $g$.
\end{enumerate}
\end{lemma}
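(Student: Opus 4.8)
The plan is to combine the homology of the presentation $2$-complex with two classical facts about a connected compact K\"ahler manifold $X$ with $\pi_1(X)\,=\,G$: that $b_1(X)$ is even, and that an isotropic subspace of holomorphic $1$-forms of dimension $\geq 2$ forces a holomorphic fibration over a curve. Let $K$ be the presentation $2$-complex of $\langle x_1,\cdots,x_n\mid w\rangle$ and let $\overline{w}\in\mathbb{Z}^n$ be the abelianized relator. Since $K$ has a single $2$-cell, the rational cellular chain complex shows $\dim_{\mathbb{Q}}H_2(K;\mathbb{Q})\,\leq\, 1$, with $H_2(K;\mathbb{Q})\,=\, 0$ precisely when $\overline{w}\,\neq\, 0$; attaching cells of dimension $\geq 3$ to build $BG$ then gives $\dim_{\mathbb{Q}}H^2(G;\mathbb{Q})\,\leq\, 1$, with $H^2(G;\mathbb{Q})\,=\, 0$ when $\overline{w}\,\neq\, 0$. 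The classifying map $X\,\to\, BG$ is an isomorphism on $H^1(-;\mathbb{Q})$ and a ring map, so the image of the cup product $H^1(X;\mathbb{Q})\otimes H^1(X;\mathbb{Q})\,\to\, H^2(X;\mathbb{Q})$ lies in the image of $H^2(G;\mathbb{Q})$ and hence has dimension $\leq 1$. Also $b_1(G)$ equals $n$ if $\overline{w}\,=\, 0$ and $n-1$ otherwise, so $b_1(G)\,\geq\, 1$ because $n\,\geq\, 2$.

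The first step is to show $\overline{w}\,=\, 0$. Since $b_1(X)\,=\,b_1(G)\,>\, 0$, Hodge theory supplies a nonzero holomorphic $1$-form $\omega$, and positivity of the K\"ahler form $\kappa$ gives $\int_X i\,\omega\wedge\overline{\omega}\wedge\kappa^{\dim_{\mathbb{C}}X-1}\,>\, 0$; hence $[\omega]\cup[\overline{\omega}]\,\neq\, 0$, so the cup product on $H^1(X;\mathbb{Q})$ is nonzero. By the preceding paragraph its image is then nonzero and contained in the image of $H^2(G;\mathbb{Q})$, so $H^2(G;\mathbb{Q})\,\neq\, 0$, which forces $\overline{w}\,=\, 0$. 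This is exactly the assertion that each $x_i$ has exponent sum zero in $w$, i.e.\ that $x_i$ and $x_i^{-1}$ occur equally often --- the second half of part~(2). It also gives $b_1(G)\,=\, n$, which is even by Hodge theory (part~(1)); put $g\,=\,n/2\,\geq\, 1$.

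The second step produces the surjection onto $\Gamma_g$ of part~(3). The cup product $H^1(X;\mathbb{Q})\otimes H^1(X;\mathbb{Q})\,\to\, H^2(X;\mathbb{Q})$ is a morphism of rational Hodge structures of weight $2$, so its image --- which by the above is one-dimensional --- is a weight-$2$ sub-Hodge structure of $H^2(X;\mathbb{Q})$ of dimension one, hence of type $(1,1)$. Thus its $(2,0)$-component is zero; as $[\omega]\cup[\omega']$ lies in that component for holomorphic $1$-forms $\omega,\omega'$, we get $\omega\wedge\omega'\,=\, 0$ in $H^0(X,\Omega^2_X)$, so the $g$-dimensional space $H^0(X,\Omega^1_X)$ is isotropic for the wedge pairing. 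If $g\,\geq\, 2$, the Castelnuovo--de Franchis theorem and its extension to higher-dimensional isotropic subspaces (see \cite{abckt}) provide a surjective holomorphic map with connected fibres $h\colon X\,\to\, C$ onto a smooth projective curve $C$ with $H^0(X,\Omega^1_X)\,\subseteq\, h^*H^0(C,\Omega^1_C)$, so $g(C)\,\geq\, g$; but $h^*$ is injective on $H^1(-;\mathbb{Q})$, forcing $2g(C)\,\leq\, b_1(X)\,=\, 2g$. Hence $g(C)\,=\, g$ and $h^*$ is an isomorphism on $H^1$, and since $h$ has connected fibres $G\,=\,\pi_1(X)$ surjects onto $\pi_1(C)\,=\,\Gamma_g$. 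When $g\,=\, 1$ (so $n\,=\, 2$) the surjection onto $\Gamma_1\,=\,\mathbb{Z}^2$ is immediate from $b_1(G)\,=\, 2$.

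The third step shows that every $x_i$ occurs in $w$, completing part~(2). If some $x_i$ did not occur, then $G\,\cong\, G_0*\mathbb{Z}$, so $BG\simeq BG_0\vee S^1$ and $H^1(X;\mathbb{Q})\,\cong\, H^1(G_0;\mathbb{Q})\oplus\mathbb{Q}$; the generator $\xi$ of the second summand is nonzero, but $\xi\cup H^1(X;\mathbb{Q})\,=\, 0$ because cup products between the two wedge factors vanish and $H^2(\mathbb{Z};\mathbb{Q})\,=\, 0$. For $g\,\geq\, 2$ this contradicts the fact that $h^*$ from the second step carries the cup-product pairing on $H^1(X;\mathbb{Q})$ to the intersection pairing on $H^1(C;\mathbb{Q})$, which is nondegenerate since $C$ has positive genus. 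For $g\,=\, 1$ one argues directly: $\overline{w}\,=\, 0$ and $w$ a word in a single letter force $w\,=\, 1$, so $G\,\cong\, F_2$, which is not K\"ahler because the cup product on $H^1(F_2;\mathbb{Q})$ vanishes while $b_1(F_2)\,=\, 2\,>\, 0$. The main obstacle is the second step: one must apply the isotropic subspace theorem in a form that guarantees \emph{connected} fibres and locates the genus of the base curve exactly at $b_1(X)/2$; the rest --- evenness of $b_1$, the K\"ahler positivity inequality, and the combinatorics of the one-relator presentation --- is routine.
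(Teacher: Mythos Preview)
Your argument is correct. The paper itself does not prove this lemma: it cites Arapura for $n>2$ and adds a short remark for $n=2$ using evenness of $b_1$ together with Gromov's theorem that a K\"ahler group is never a nontrivial free product. Your proof is a self-contained, uniform treatment for all $n\geq 2$ along the lines one expects from Arapura's original argument: compute $H^2(G;\mathbb{Q})$ from the presentation $2$-complex, use the Hodge--Riemann positivity $i\int\omega\wedge\overline{\omega}\wedge\kappa^{d-1}>0$ to force $\overline{w}=0$ and $b_1=n$, observe that the one-dimensional image of the cup product is a weight-$2$ Hodge substructure hence of type $(1,1)$, and feed the resulting isotropy of $H^0(X,\Omega^1_X)$ into the Castelnuovo--de Franchis/Catanese theorem to get the fibration onto a genus-$g$ curve.

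The one genuine methodological difference is in the first half of part~(2). The paper's remark invokes Gromov's (deep) theorem to exclude $G\cong G_0*\mathbb{Z}$; you instead deduce nondegeneracy of the cup pairing on $H^1(X;\mathbb{Q})$ from the fibration $h$ of Step~2 (via injectivity of $h^*$ on $H^2(C;\mathbb{Q})$, which follows from the same positivity argument applied to $h^*\omega$), and then observe that the $S^1$ wedge summand would contribute a class in the radical. This keeps the whole proof inside elementary Hodge theory and avoids appealing to Gromov's result. Either route is short; yours has the advantage of being entirely internal to the circle of ideas already in play.
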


\begin{rmk} Arapura proves Lemma \ref{ara} under the 
assumption that $n$ is strictly greater than two. However, if 
$n=2$, then it follows that the first Betti number of $G$ 
satisfies $1 \,\leq\, b_1(G) \,\leq\, 2$ and $b_1(G)\,=\,2$ if 
and only if the number of occurrences of $x_i$ and $x_i^{-1}$ in 
$w$ coincide for $i\,=\, 1,2$. Since $G$ is K\"ahler, 
we know that $b_1(G)$ is even, and hence 
$b_1(G)\, =\, 2$. Further, $G$ cannot split as 
a non-trivial free product \cite{gromov-pi1}. Hence each $x_i$ 
occurs at least once in the word $w$ for $i\,=\, 1,2$. Also, 
since $b_1(G)\,=\, 2$, the group $G$ surjects onto $\Gamma_1$.
\end{rmk}

We shall now recall some basic structure theory of one-relator groups due to Magnus and Moldavansky.
A subgroup $K$ of a one-relator group $G$ is called a {\it Magnus subgroup} if it is freely generated by a
subset of the generating set of $G$ that omits one of the 
generators present in the defining relator of $G$.
If $G$ is an HNN extension of $H$, and $t$ is the free letter 
conjugating subgroup $A \,\subset\, H$ to subgroup $B\,\subset\, 
H$ so that $$G\,=\,\langle H\, , t\,\mid\, A^t\,=\,B\rangle\, ,$$
then  $A$ and $B$ are called the {\it associated subgroups} of 
the HNN extension.

\begin{theorem}[{\cite{mo}, \cite[p. 250]{ms}}]\label{mm}
Let $G \,=\, \langle x_1 , \cdots , x_n \,\mid\, w \rangle$ be a 
one-relator group, where the exponent sum of $x_1$ in $w$ is 
zero. Then the following hold:
\begin{enumerate}
\item There exists a finitely generated
one-relator group $H$ whose defining relator has shorter length
than $w$.
\item $G$ is an HNN extension $G =H_{\ast F}$, where $F$ is a 
non-trivial free group.
\item The associated subgroups are Magnus subgroups of $H$.
\end{enumerate}
\end{theorem}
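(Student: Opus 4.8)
The plan is to prove this by the classical Magnus rewriting process, which converts the single defining relator into a shorter one over an enlarged generating set obtained by ``unrolling'' along the generator $x_1$. First I would make two harmless normalizations: replace $w$ by a cyclically reduced conjugate (this changes $G$ only by an isomorphism), and note that $x_1$ must actually occur in $w$ — otherwise $G$ is a free product with $\Z$ and the statement is not being claimed — so an occurrence may be assumed. Since $w$ then involves some $x_j$ with $j\ge 2$ (a cyclically reduced word of zero $x_1$-exponent that involves only $x_1$ would be trivial), I may further cyclically permute $w$ so that its first letter is $x_j^{\pm1}$ for some $j\ge 2$; this will guarantee that the subscript $0$ occurs below.

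Next, exploiting that the exponent sum of $x_1$ in $w$ vanishes, I perform the substitution $x_{j,k}:=x_1^{-k}x_j x_1^{k}$ for $2\le j\le n$ and $k\in\Z$. Reading $w$ from the left and tracking the running exponent of $x_1$, each occurrence of $x_j^{\pm1}$ ($j\ge 2$) is recorded as $x_{j,k}^{\pm1}$ for the value $k$ of the running sum at that position, while every $x_1^{\pm1}$ is absorbed into the subscripts; because the total $x_1$-exponent is $0$, this produces a word $w'$ in the symbols $x_{j,k}$ alone. Let $m$ and $M$ be the smallest and largest subscripts appearing in $w'$. A short argument from cyclic reducedness of $w$ shows $m<M$: a strict local maximum or minimum of the running sum that carries no $x_j$ would force a cancelling pair $x_1x_1^{-1}$ in $w$, so if all $x_j$'s sat at one subscript level, $w$ would contain no $x_1$. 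Moreover $0\in[m,M]$ by the choice of first letter, the length of $w'$ equals the number of occurrences in $w$ of $x_2,\dots,x_n$, hence is strictly less than $|w|$ since $x_1$ occurs, and $w'$ is again cyclically reduced (inherited from $w$). Set $H:=\langle x_{j,k}: 2\le j\le n,\ m\le k\le M \mid w'\rangle$; this is the finitely generated one-relator group required in part (1).

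For part (2) I would identify $G$ with the HNN extension $\langle H,t\mid t^{-1}At=B\rangle$, where $A:=\langle x_{j,k}: m\le k\le M-1\rangle$, $B:=\langle x_{j,k}: m+1\le k\le M\rangle$, the stable-letter conjugation is the subscript shift $x_{j,k}\mapsto x_{j,k+1}$, and $t$ corresponds to $x_1$, $x_j$ to $x_{j,0}$. The isomorphism is witnessed by two mutually inverse homomorphisms: the map $G\to\langle H,t\mid\cdots\rangle$ sending $x_1\mapsto t$ and $x_j\mapsto x_{j,0}$ is well defined because $w$ maps to $w'$, which is trivial in $H$; the reverse map sends $t\mapsto x_1$ and $x_{j,k}\mapsto x_1^{-k}x_j x_1^{k}$, and is well defined because $w'$ maps to $w=1$ and the shift relations map to the identities $x_1^{-1}(x_1^{-k}x_j x_1^{k})x_1=x_1^{-(k+1)}x_j x_1^{k+1}$. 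Checking that the two composites fix the generators is routine. Finally, for part (3) — which is also what legitimizes the HNN extension in part (2) — I invoke the Freiheitssatz: since $w'$ is cyclically reduced and, by maximality of $M$, involves some generator of subscript $M$, the family $\{x_{j,k}: m\le k\le M-1\}$ omits a generator occurring in $w'$ and therefore freely generates a free subgroup of $H$ (and a subset of a free basis is again a free basis), and symmetrically for $B$ using minimality of $m$. Thus $A$ and $B$ are Magnus subgroups of $H$, are free of rank $\ge 1$ since $m<M$, and the subscript shift restricts to an isomorphism from $A$ onto $B$ of free groups, so $F:=A$ is the required non-trivial free group.

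I expect the main obstacle to be the bookkeeping in the rewriting step together with the verification that the two presentations of $G$ coincide; everything past that is a direct appeal to the Freiheitssatz. A minor but essential point of care is that the length of $w'$ drops strictly only because $x_1$ genuinely occurs in $w$, which is why the occurrence hypothesis (implicit in the statement) cannot be dropped.
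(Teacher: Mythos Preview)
The paper does not prove this theorem at all: it is quoted as a known structural result with citations to Moldavansky \cite{mo} and McCool--Schupp \cite{ms}, and is then used as a black box in the proof of Theorem~\ref{1rel}. Your proposal reproduces exactly the classical Magnus--Moldavansky argument that those references contain --- rewrite $w$ in the conjugates $x_{j,k}$ of $x_2,\dots,x_n$ by powers of $x_1$, take $H$ to be the one-relator group on the finitely many $x_{j,k}$ that actually occur, apply the Freiheitssatz to see that the truncated generating sets $A$ and $B$ are free (Magnus) subgroups of $H$, and exhibit $G$ as the HNN extension with stable letter $x_1$. The outline is sound and the key points (strict shortening because $x_1$ occurs, $m<M$ so $F$ is non-trivial, Freiheitssatz for the associated subgroups) are all correctly isolated.

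One genuine bookkeeping slip to fix before the ``mutually inverse homomorphisms'' check goes through: with your convention $x_{j,k}=x_1^{-k}x_jx_1^{k}$, the identity $x_1^{s}x_jx_1^{-s}=x_{j,-s}$ shows that the unique expression of $w$ as a word in the $x_{j,k}$ carries subscripts equal to \emph{minus} the running $x_1$-sum, not plus. Equivalently, under your forward map $x_1\mapsto t$, $x_j\mapsto x_{j,0}$ and relation $t^{-1}x_{j,k}t=x_{j,k+1}$, pushing $t$'s to the right gives $t^{a}x_{j,0}=x_{j,-a}t^{a}$, so the image of $w$ lands on the word with negated subscripts, not on your $w'$. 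Either redefine $x_{j,k}:=x_1^{k}x_jx_1^{-k}$, or record the subscript as the negative of the running sum; once the conventions match, your two homomorphisms really are mutually inverse and the proof is complete.
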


Finally we shall need two results due to Collins describing 
intersections of Magnus subgroups.

\begin{theorem}[{\cite[Theorem 1]{coll}}]\label{coll0} 
Let $G \,=\, 
\langle 
x_1 \, , \cdots \, , x_n \,\mid\, w \rangle$ be a one-relator 
group, 
where $w$ is cyclically reduced.
Let $M\,=\, F(S)$ and $N\,=\, F(T)$ be Magnus subgroups of $G$,
where $S$ and $T$ 
are subsets of the generating set, allowing $M\,= \,N$. If $M 
\bigcap N$ is distinct from $F(S \bigcap T)$, then $M \bigcap N$ 
is the free product of $F(S\bigcap T)$ and an infinite cyclic 
group.
\end{theorem}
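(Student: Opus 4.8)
The plan is to prove this by induction on $|w|$, the length of the cyclically reduced relator, using the Magnus--Moldavansky structure theory recorded in Theorem~\ref{mm}. The base of the induction is the case in which the Moldavansky process terminates, i.e. $w$ is conjugate to a power of a single generator: then $G$ is free or a free product $\Z/m\ast F$, every Magnus subgroup is generated by a subset of a basis of one free factor, and two such subgroups intersect in exactly $F(S\cap T)$ by the Kurosh subgroup theorem --- so the exceptional conclusion does not occur at the bottom. For the inductive step one first reduces to the case that some generator, after renaming $x_1$, has exponent sum zero in $w$: if not, at least two generators occur with nonzero exponent sum, and the standard Moldavansky substitution embeds $G$ into a one-relator group $\widehat G$ having a generator of exponent sum zero, carrying $F(S)$, $F(T)$ and $F(S\cap T)$ to Magnus subgroups of $\widehat G$ with the expected label intersection; it then suffices to treat $\widehat G$. (Checking that the substitution respects the Magnus subgroups in question is routine but slightly delicate, and is the price of not assuming $w$ already has a zero-exponent-sum generator.) So assume $x_1$ has exponent sum zero, set $t:=x_1$, and by Theorem~\ref{mm} write $G=\langle H,t\mid t^{-1}At=B\rangle$ with $H$ a one-relator group of strictly shorter relator and $A,B$ Magnus subgroups of $H$, freely generated by ``sheets'' $x_{i,j}=t^{-j}x_it^j$ ($2\le i\le n$).

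Now split into cases according to whether the stable letter $t=x_1$ lies in $S$ and in $T$. If $t\notin S\cup T$, then $S,T\subseteq\{x_2,\dots,x_n\}$; identifying $x_i$ with $x_{i,0}$ we find that $M=F(S)$ and $N=F(T)$ are Magnus subgroups of $H$ (each omits all but finitely many sheets) with label intersection $F(S\cap T)$, and since $H$ embeds in $G$ the intersection $M\cap N$ is unchanged, so the inductive hypothesis applied inside $H$ gives the conclusion. If exactly one of $t\in S$, $t\in T$ holds --- say $t\in S$ and $t\notin T$ --- then $N\le H$ fixes the base vertex of the Bass--Serre tree of the splitting, so $M\cap N=(M\cap H)\cap N$; a Bass--Serre normal-form argument identifies $M\cap H$ with the Magnus subgroup $F(\{x_{i,j}:i\in S\setminus\{x_1\},\ \mu_i\le j\le\nu_i\})$ of $H$ (the sheets of the $S$-generators that survive into $H$), so again $M\cap N$ is an intersection of two Magnus subgroups of $H$ with the correct label intersection, and induction finishes it.

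The genuinely hard case is $t\in S\cap T$. Then $t\in F(S\cap T)\le M\cap N$, and I would use the $t$-exponent homomorphism $\tau$: restricted to $M=\langle t\rangle\ast F(S')$ and $N=\langle t\rangle\ast F(T')$ (with $S'=S\setminus\{x_1\}$, $T'=T\setminus\{x_1\}$) its kernels are the ``unbounded'' free subgroups $U_{S'}:=F(\{x_{i,j}:i\in S',\,j\in\Z\})$ and $U_{T'}$. An element of $M\cap N$ has $t$-component in $\langle t\rangle$ and remaining part in $U_{S'}\cap U_{T'}$, so the task is to compute this intersection and re-assemble it with $\langle t\rangle$. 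Here $U_{S'}$ acts on the Bass--Serre tree with vertex stabilisers the conjugates $t^{-j}M_0\,t^j$, where $M_0=F(\{x_{i,j}:i\in S',\ \mu_i\le j\le\nu_i\})$ is a Magnus subgroup of $H$, and edge stabilisers the intersections $A\cap M_0$ and $B\cap M_0$ --- intersections of Magnus subgroups of $H$, to which the inductive hypothesis applies. Running this analysis for $U_{S'}\cap U_{T'}$ through the (finitely terminating) Magnus induction yields the claimed structure.

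I expect the main obstacle to be exactly this case $t\in S\cap T$: a priori the sheet-by-sheet decomposition of $U_{S'}\cap U_{T'}$ involves infinitely many vertex and edge intersections, each of which --- by the inductive hypothesis applied in $H$ --- could contribute its own exceptional infinite cyclic factor, whereas the theorem allows only one. The real content is to show that the HNN relation $t^{-1}At=B$ forces these potential exceptional factors to be conjugate across sheets and to amalgamate into a single infinite cyclic group (equivalently: an exceptional intersection can occur on at most one sheet unless $w$ has a very rigid special form, in which case all the exceptional elements are powers of one conjugate of an explicit word), and then to verify that this lone $\Z$ is either absorbed by, or compatibly joined to, the $\langle t\rangle$ already present in $M\cap N$. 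Making this alignment precise, together with the compatibility claim in the preliminary substitution step, is the technical heart of the argument.
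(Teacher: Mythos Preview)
The paper does not contain a proof of this statement: Theorem~\ref{coll0} is quoted verbatim from Collins \cite[Theorem~1]{coll} and is used as a black box, just as Theorems~\ref{cdplus}, \ref{cd2normal}, \ref{mm}, \ref{coll}, \ref{ks}, \ref{fh}, \ref{ara-curve}, \ref{center} are all imported from the literature without proof. So there is nothing in the paper to compare your attempt against.

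That said, your outline is broadly in the spirit of Collins' actual argument: he too proceeds by the Magnus--Moldavansky induction, reduces to an HNN extension over a shorter one-relator group, and the genuine work lies in the case where the stable letter belongs to both $S$ and $T$. You are right that the heart of the matter is controlling the exceptional cyclic factors across sheets, and you are also right to flag this as the place where your sketch stops being a proof. Collins' paper is some thirty pages long precisely because making that alignment precise --- showing that at most one exceptional infinite cyclic factor survives the amalgamation over all sheets --- requires a careful case analysis of the possible forms of $w$ and a delicate bookkeeping of normal forms in the HNN extension. Your proposal correctly locates the difficulty but does not resolve it; what you have is an accurate road map rather than a proof, and filling in the $t\in S\cap T$ case would amount to reproducing the bulk of \cite{coll}.
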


\begin{theorem}[{\cite[Theorem 2]{coll}}]\label{coll}
Let $G$, $M$ and $N$ be as in Theorem \ref{coll0}. For any 
$g\,\in\, G$, either $gMg^{-1}\bigcap N$ is cyclic (possibly 
trivial) or $g\,\in\, NM$.
\end{theorem}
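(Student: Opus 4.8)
The plan is to prove the statement by induction on the length $|w|$ of a cyclically reduced defining word, the engine being the Magnus--Moldavansky decomposition of a one-relator group as an HNN extension, and the base case being the situation in which $G$ is free, infinite cyclic or a free product of cyclic groups, where the claim is readily checked by normal-form arguments. I would begin with the usual normalizations of Magnus's method. If a generator is absent from $w$, then $G$ is a free product $G_0 * F_0$ with $F_0$ free and $G_0$ one-relator on the generators occurring in $w$, every Magnus subgroup of $G$ decomposes compatibly (as $M_0$ or $M_0 * \langle x_n\rangle$ with $M_0$ a Magnus subgroup of $G_0$), and the statement reduces to that for $G_0$ via the Kurosh description of intersections of conjugates in a free product. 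If no occurring generator has exponent sum zero, I would embed $G$ into a one-relator overgroup built over a modified generating set that does have such a generator, carrying $M$ and $N$ along. Thus one may assume $w$ is cyclically reduced, involves every generator, and that $x_1$ occurs in $w$ with exponent sum zero, so that Theorem~\ref{mm} presents $G \,=\, \langle H, t\mid t^{-1}At = B\rangle$ with $t = x_1$, where $H$ is a one-relator group whose cyclically reduced relator is strictly shorter than $w$ and $A, B$ are Magnus subgroups of $H$.

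Let $\mathcal{T}$ be the Bass--Serre tree of this splitting, with $H$ the stabilizer of a base vertex. After the Moldavansky rewriting the generators of $G$ are $t$ together with $x_2,\ldots,x_n$, and $\langle x_2,\ldots,x_n\rangle$ embeds in $H$; hence a Magnus subgroup $F(S)$ of $G$ is either \emph{elliptic} --- when $t\notin S$, in which case $F(S)\le H$ and is conjugate in $H$ to a Magnus subgroup of $H$ --- or it contains hyperbolic elements, when $t\in S$, in which case, by Bass--Serre theory, $F(S)$ has a minimal invariant subtree on which it acts with vertex and edge stabilizers among the intersections of $F(S)$ with conjugates of $H$, $A$ and $B$. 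Given $g\in G$, I would analyze $K := gMg^{-1}\cap N$ by its action on $\mathcal{T}$. If $K$ has a hyperbolic element, then a suitable translate of the invariant subtree of $M$ and the invariant subtree of $N$ must overlap in more than a point; since the edge stabilizers $A^{h}$, $B^{h}$ are free and are Magnus subgroups of $H$, an overlap argument forces $K$ to be cyclic unless the overlap runs along an entire line, and that exceptional configuration is exactly what allows $g$, after absorbing the conjugator into $N$, to be placed in $NM$. If $K$ is elliptic, then a conjugate of $K$ sits inside $H$ as an intersection of conjugates of Magnus subgroups of $H$, and the inductive hypothesis for $H$ --- Theorem~\ref{coll} for the shorter relator, together with Theorem~\ref{coll0} applied to $H$ --- gives either that $K$ is cyclic or a double-coset condition inside $H$, which promotes to $g\in NM$ in $G$ because the edge groups are Magnus in $H$.

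I expect the genuine difficulty to be the bookkeeping rather than any isolated conceptual point, and to be concentrated in two places. The first is transporting the Magnus subgroups $M$ and $N$ faithfully through the Moldavansky rewriting, and through the preliminary enlargement when no exponent sum vanishes: a Magnus subgroup of $G$ does not in general pass to a Magnus subgroup of $H$ but to a subgroup of $H$ generated by a ``window'' of conjugates $t^{-j}x_i t^{j}$, and controlling the intersections of such subgroups requires Theorem~\ref{coll0} for $H$ and a careful choice of base point in $\mathcal{T}$. The second is matching the delicate borderline of the tree analysis --- $K$ infinite cyclic while the two invariant subtrees share a bi-infinite line --- with the exceptional case of Theorem~\ref{coll0} and the alternative $g\in NM$, so that no case is lost. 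As a fallback I would keep in reserve the diagrammatic approach: a direct analysis of annular van Kampen diagrams over $\langle x_1,\ldots,x_n\mid w\rangle$ relating a putative element of $gMg^{-1}\cap N$ to words in $M$ and in $N$, invoking Newman's Spelling Theorem when $w$ is a proper power.
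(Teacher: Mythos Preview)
The paper does not prove Theorem~\ref{coll}; it is quoted without proof as \cite[Theorem~2]{coll}, so there is no in-paper argument to compare your proposal against. For what it is worth, your outline --- induction on relator length via the Magnus--Moldavansky hierarchy, passage to the vertex group $H$ through the Bass--Serre tree of the HNN splitting, and appeal to Theorem~\ref{coll0} at the inductive step --- is indeed the skeleton of Collins's own proof in \cite{coll}, and the two places you flag as delicate (transporting Magnus subgroups through the Moldavansky rewriting, and the borderline tree configurations) are exactly where Collins's paper spends most of its length on normal-form bookkeeping and case analysis; your sketch is accurate as a plan but is far from a proof as written.
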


\begin{cor}\label{case1}
Let $G$, $S$, $M$ and $N$ be as in Theorem \ref{coll0}.
If $S$ has more than one element, and
$G \,\neq\, NM$, then there exist $$g_1\, , g_2\, , g_3 \,\in\, 
G$$ such that $g_1Mg_1^{-1} \bigcap g_2Mg_2^{-1} \bigcap N 
\bigcap g_3Ng_3^{-1}$ is trivial.
\end{cor}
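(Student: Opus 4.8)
The plan is to use Collins' two theorems in tandem. First I would apply Theorem \ref{coll} with $g$ the identity: since $S$ has more than one element, $F(S)=M$ is non-cyclic, so $M\bigcap N = M\cap N$ cannot be cyclic unless it equals $F(S\cap T)$; but if $M\bigcap N=F(S\cap T)$ then one checks this forces $G=NM$ in the relevant cases, contrary to hypothesis — actually the cleaner route is: because $G\neq NM$, Theorem \ref{coll} applied to a well-chosen $g$ shows $gMg^{-1}\bigcap N$ is cyclic, and since $M$ itself is non-cyclic (as $|S|>1$), we have $gMg^{-1}\bigcap N$ a proper subgroup of $gMg^{-1}$. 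So the strategy is to pick $g_2\in G\setminus NM$ and set $g_1 = 1$, $g_3 = 1$; then $M\bigcap g_2Mg_2^{-1}\bigcap N$ is already cyclic (possibly trivial) by Theorem \ref{coll} with the conjugating element $g_2$. If it is already trivial we are done with $g_3$ arbitrary. Otherwise it is infinite cyclic, generated by some element $h$, and I must further intersect with a conjugate $g_3Ng_3^{-1}$ to kill this $\Z$.

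The second step is thus to choose $g_3$ so that $h\notin g_3Ng_3^{-1}$, equivalently $g_3^{-1}h g_3\notin N$. Here I would again invoke the structure of $M\cap N$ from Theorem \ref{coll0}: writing $M\bigcap N = F(S\cap T)* \langle c\rangle$ for some element $c$ (when the intersection is not just $F(S\cap T)$), the cyclic group $M\bigcap g_2Mg_2^{-1}\bigcap N$ sits inside $M\cap N$, hence is conjugate into one of the free factors. If it lies (up to conjugacy in the free product) in $F(S\cap T)$, then since $S\cap T$ omits a generator it is a proper Magnus subgroup and there is plenty of room to conjugate $N$ away from it; if it lies in the $\langle c\rangle$ factor, I use that $c$ is not a proper power of an element of $N$ together with Theorem \ref{coll} once more (applied to the pair $N$, $N$) to find $g_3$ with $g_3Ng_3^{-1}\cap N$ cyclic and disjoint from $\langle h\rangle$. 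In either case the upshot is that $g_1Mg_1^{-1}\bigcap g_2Mg_2^{-1}\bigcap N\bigcap g_3Ng_3^{-1}$ is trivial.

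The main obstacle I anticipate is the bookkeeping in the final intersection: Theorem \ref{coll} controls $gMg^{-1}\cap N$ but not directly triple or quadruple intersections, so I have to be careful that after intersecting with the last conjugate $g_3Ng_3^{-1}$ the result really becomes trivial rather than merely staying cyclic. The way to force this is to ensure the generator $h$ of the cyclic group obtained after three factors is \emph{not} conjugate (by $g_3$) into $N$; since any infinite cyclic subgroup of a one-relator group is self-normalizing or has controllable normalizer, and since $G\neq NM$ gives a coset of $N$ disjoint from $M$, such a $g_3$ exists. A secondary point requiring care is the case where $M\bigcap g_2Mg_2^{-1}\bigcap N$ already equals $F(S\cap T)$ (so is non-cyclic when $|S\cap T|>1$); but then $g_1Mg_1^{-1}=M$ can itself be re-chosen as a third conjugate $g_1Mg_1^{-1}$ with $g_1\notin MM=M$ — no, more precisely, one reduces to the previous situation by first replacing $N$ with a suitable conjugate so that $S\cap T$ shrinks, which is where the hypothesis $S$ has more than one element is used to guarantee $S\cap T$ can be made to have strictly fewer elements than $S$ and the induction on $|S|$ closes.
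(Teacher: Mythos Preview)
Your first step is on the right track: by Theorem \ref{coll}, the hypothesis $G\neq NM$ lets you choose some $g$ with $gMg^{-1}\cap N$ cyclic. But the second step---killing this cyclic subgroup $\langle h\rangle$ by intersecting with a further conjugate $g_3Ng_3^{-1}$---does not close as written. You are trying to find $g_3$ with $\langle h\rangle\cap g_3Ng_3^{-1}$ trivial, i.e.\ no nonzero power of $h$ lies in $g_3Ng_3^{-1}$. Your case analysis via Theorem \ref{coll0} (locating $h$ inside the free-product decomposition of $M\cap N$) and the appeal to Theorem \ref{coll} for the pair $(N,N)$ do not actually produce such a $g_3$: Theorem \ref{coll} only tells you $g_3Ng_3^{-1}\cap N$ is cyclic when $g_3\notin N$, not that its generator is incommensurable with $h$; and the phrases ``plenty of room to conjugate $N$ away'' and the proposed ``induction on $|S|$'' at the end are not arguments. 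In particular, nothing you have said rules out the possibility that every power of $h$ lies in every conjugate of $N$.

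The paper's proof avoids this entirely by a simple trick you have overlooked. Having chosen $g_1$ with $g_1Mg_1^{-1}\cap N=\langle h\rangle$, it observes that $|S|>1$ forces $|T|>1$, so $N=F(T)$ is free of rank at least two. In a nonabelian free group, any cyclic subgroup meets some conjugate of itself trivially: pick $g_3\in N$ with $g_3\langle h\rangle g_3^{-1}\cap\langle h\rangle=\{1\}$. Now set $g_2=g_3g_1$. Since $g_3\in N$ we have $g_3Ng_3^{-1}=N$, and conjugating the equality $g_1Mg_1^{-1}\cap N=\langle h\rangle$ by $g_3$ yields $g_2Mg_2^{-1}\cap N=g_3\langle h\rangle g_3^{-1}$. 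Hence the full four-fold intersection sits inside $\langle h\rangle\cap g_3\langle h\rangle g_3^{-1}=\{1\}$. The point is that the \emph{second} conjugate of $M$, not a clever conjugate of $N$, is what kills the cyclic group; the conjugates of $N$ in the statement are both just $N$ itself. This is both shorter and free of the obstacles you identified in your third paragraph.
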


\begin{proof}
By Theorem \ref{coll}, there exists $g\,=\,g_1\,\in\, G$ such 
that $g_1Mg_1^{-1}  \cap N \,\subset\, N$ is cyclic.
Let $g_1Mg_1^{-1}\bigcap N \,=\, \langle h \rangle$. 
Since $S$ has more than one element, it follows that $T$ also has 
more than one element. Hence $N$ is a free group on more than one 
generator. In particular, there exists $g_3 \in N$ such that $g_3 
\langle h \rangle g_3^{-1} \bigcap \langle h \rangle\,=\, \{1 
\}$. The proof is completed by setting $g_2 \,=\, g_3g_1$.
\end{proof}

A subgroup $H$ of a group $G$ is said to have {\it height} at 
most $n-1$ if for any $n$ elements $g_1\, , \cdots \, , g_n$ 
satisfying $Hg_i \,\neq\, Hg_j$ whenever $i\,\neq\, j$, the 
intersection $\bigcap_{i=1}^n g_iHg_i^{-1}$ is trivial.

\begin{prop}\label{case2}
Let $G \,=\, \langle x_1\, ,\cdots\, , 
x_n \,\mid\, w\rangle$ be  a one-relator group, where $w$ is 
cyclically reduced. Let $M \,=\, F(S)$ and $N \,=\, F(T)$ be 
Magnus subgroups of $G$, where $M\,\neq\, N$. Then there exist
elements $\{g_i\}_{i=1}^k \, , \{h_i\}_{i=1}^k\, \in\, G^k$, for 
some $k\,>\, 1$, such that $(\bigcap_{i=1}^k g_iMg_i^{-1}) 
\bigcap (\bigcap_{i=1}^k h_iNh_i^{-1})$ is trivial.
\end{prop}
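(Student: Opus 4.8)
The plan is to use Collins's intersection theorems via Corollary \ref{case1} and Theorem \ref{coll0}, splitting into cases according to whether the relevant Magnus subgroups are "large" (rank $>1$) or cyclic, and whether the coset condition $G = NM$ holds. The key point is that a Magnus subgroup of rank $>1$ is a nonabelian free group, so it contains two infinite cyclic subgroups whose intersection is trivial; this will let us manufacture enough conjugates to force the total intersection down to the identity. Throughout, all intersections that appear will either be handled directly by Corollary \ref{case1} or reduced to the case of cyclic Magnus subgroups.

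First I would dispose of the easy case: if $G \neq NM$ and $S$ has more than one element, then Corollary \ref{case1} already gives $g_1, g_2, g_3$ with $g_1 M g_1^{-1} \cap g_2 M g_2^{-1} \cap N \cap g_3 N g_3^{-1}$ trivial; taking $k = 3$ (padding with, say, $g_4 = g_1$, $h_1 = 1$, $h_2 = g_3$, $h_3 = h_1$ to match the required symmetric shape) finishes it. By symmetry the same works if $G \neq NM$ and $T$ has more than one element. So it remains to treat the situation where either $G = NM$, or both $S$ and $T$ are singletons. If $|S| = |T| = 1$, then $M$ and $N$ are each infinite cyclic; since $M \neq N$ and they are Magnus subgroups on disjoint one-element subsets of the generating set, $S \cap T = \emptyset$, so $F(S \cap T) = \{1\}$, and Theorem \ref{coll0} shows $M \cap N$ is either trivial or infinite cyclic. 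In the trivial case we are done with $k = 2$ (taking $g_1 = g_2 = h_1 = h_2 = 1$, since repeating a conjugate is allowed as the definition only requires the product intersection to be trivial). In the infinite-cyclic case $M \cap N = \langle h \rangle$, and since $N$ is... wait — here $N$ has rank one, so this sub-subcase needs the coset hypothesis too; more carefully, if $M \cap N$ is nontrivial cyclic with both $M,N$ of rank one, then (again by Theorem \ref{coll0}) $M \cap N = F(S \cap T) \ast \Z = \Z$, but $F(S\cap T)$ is trivial so no contradiction yet — one invokes the standard fact that two distinct maximal cyclic Magnus subgroups on disjoint generators intersect trivially, or alternatively one notes that this degenerate configuration forces $w$ to have a very restricted form which can be excluded, or simply falls under the $G = NM$ analysis. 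The cleanest route is: whenever $G = NM$ fails we are in the Corollary \ref{case1} case, and whenever $G = NM$ holds we argue separately below, so the only genuinely remaining case is $G = NM$.

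So suppose $G = NM$. The idea now is that $M \cap N \supseteq F(S \cap T)$, and by Theorem \ref{coll0} either $M \cap N = F(S \cap T)$ or $M \cap N = F(S \cap T) \ast \Z$; in either event $M \cap N$ is a free group. If $S \cap T = \emptyset$ then $M \cap N$ is cyclic (possibly trivial), and then conjugating by a suitable element $g$ of $N$ (possible since $N$ has rank $>1$ after possibly interchanging — or if both have rank one we are in the degenerate case handled above) kills it: $g(M \cap N)g^{-1} \cap (M \cap N) = \{1\}$, giving $k = 2$ with $g_1 = 1, g_2 = g, h_1 = h_2 = 1$. If $S \cap T \neq \emptyset$, pick $x \in S \setminus T$ (which exists since $M \neq N$ and $M$ is Magnus, so $S \not\subseteq T$, possibly after swapping roles) and $y \in T \setminus S$; then conjugating $M$ by a high power of $y$ and $N$ by a high power of $x$ should push $\bigcap_i g_i M g_i^{-1}$ and $\bigcap_i h_i N h_i^{-1}$ apart, using that in a free group an element and a conjugate of it by a non-commuting element generate a rank-two free group, and iterating the Collins bound on intersections of conjugates of Magnus subgroups.

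The main obstacle I expect is precisely this last configuration, $G = NM$ with $S \cap T$ nonempty, where $M \cap N$ need not be cyclic and Corollary \ref{case1} is unavailable: one must genuinely exploit the free-group structure of the Magnus subgroups and the fact that $F(S\cap T)$ sits as a proper retract inside each of $M$ and $N$ (corresponding to a proper subset of generators) to find finitely many conjugates whose intersection is trivial. The technical heart is showing that conjugating by elements that "move" the missing generators reduces the intersection, which amounts to an inductive argument on $|S \cap T|$ combined with repeated application of Theorems \ref{coll0} and \ref{coll}. I would set this up as an induction on $|S| + |T|$, with the base case $|S \cap T| = 0$ handled as above and the inductive step removing one element of $S \cap T$ at a time by conjugation; the bookkeeping of how the index $k$ grows (it at worst doubles at each step, which is fine since we only need $k$ finite) is the routine part.
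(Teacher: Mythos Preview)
Your proposal is incomplete, and the gap you yourself flag --- the case $G = NM$ with $S \cap T$ nonempty --- is real and not closed by the inductive sketch you offer. In that configuration Theorem \ref{coll} gives you nothing (every $g$ lies in $NM$, so you never land in the ``cyclic intersection'' branch), and your suggested move of conjugating $M$ by $y \in T \setminus S$ does reduce $yMy^{-1} \cap M$ to a cyclic group, but you have not explained how to simultaneously control the intersection with conjugates of $N$, nor how the induction on $|S \cap T|$ actually terminates. The rank-one degenerate subcase is also left hanging. As written, the argument does not go through.

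The paper's proof avoids the entire case split. The key input you are missing is the notion of \emph{height} (defined just before the proposition) together with the theorem of Gitik--Mitra--Rips--Sageev \cite{gmrs}: a finitely generated subgroup of infinite index in a free group has finite height. The paper simply observes that $M \cap N$ is finitely generated (by Theorem \ref{coll0} it is $F(S\cap T)$ or $F(S\cap T)\ast \Z$), and that since $M \neq N$ one may assume $M \cap N$ has infinite index in $M$. Finite height then yields $m_1,\dots,m_k \in M$ with $\bigcap_i m_i(M\cap N)m_i^{-1}$ trivial; because each $m_i \in M$ this intersection equals $M \cap \bigcap_i m_i N m_i^{-1}$, which is already of the required form (take all $g_i = 1$ and $h_i = m_i$). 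No case analysis, no induction, no use of Corollary \ref{case1} at all.
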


\begin{proof}  By Theorem \ref{coll0}, the intersection $M\bigcap 
N$ is a finitely generated subgroup of both $M$ and $N$. As
$M\,\neq\, N$, we may assume without loss of generality that 
$M\bigcap N$ is an infinite index subgroup of $M$. Since any 
finitely generated infinite index subgroup of a free group has 
finite height \cite{gmrs}, the proposition follows.  
\end{proof}

\begin{prop} \label{case3} 
Let $G$, $M$ and $N$ be as in Corollary \ref{case1}. Assume that 
the intersection of all conjugates of $M$ together with all 
conjugates of $N$ is non-trivial. Then either $M\,=\,N\,=\,G$, or 
$M=N$ is an infinite cyclic normal subgroup of $G$.
\end{prop}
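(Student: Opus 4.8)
The plan is to argue by contraposition on the possible sizes of the subsets $S$ and $T$ that freely generate $M$ and $N$. First I would record the trivial observations: the hypothesis that the intersection of all conjugates of $M$ together with all conjugates of $N$ is non-trivial means, in particular, that $\bigcap_{g \in G} gMg^{-1}$ and $\bigcap_{g\in G} gNg^{-1}$ each contain a common non-trivial element — indeed the stated intersection is contained in each of these, so both the normal core of $M$ and the normal core of $N$ are non-trivial, and they share a non-trivial element. Now if $S$ has more than one element, then since $M \neq N$ would force (by Theorem \ref{coll0} together with the finite-height statement used in Proposition \ref{case2}) a trivial intersection of finitely many conjugates, the normal core hypothesis already pushes us toward $M = N$; and if additionally $G \neq NM$, Corollary \ref{case1} exhibits finitely many conjugates of $M$ and $N$ intersecting trivially, again contradicting the hypothesis. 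So the surviving possibilities are: $S$ (equivalently $T$, when $M=N$) is a singleton, making $M=N$ infinite cyclic; or $G = NM$ with $S,T$ large, which should force $M = N = G$.

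More concretely, here are the steps in order. Step 1: reduce to the case $M = N$. If $M \neq N$, Proposition \ref{case2} produces elements $\{g_i\},\{h_i\}$ with $(\bigcap g_i M g_i^{-1}) \cap (\bigcap h_i N h_i^{-1})$ trivial; but this intersection contains the intersection of \emph{all} conjugates of $M$ with all conjugates of $N$, which is non-trivial by hypothesis — contradiction. Hence $M = N$. Step 2: now $M = N = F(S)$. If $|S| = 1$, then $M$ is infinite cyclic; I must then show it is normal. The non-trivial element $h$ in the common core lies in $M = \langle a\rangle$ for a generator $a$, so $h = a^k$ for some $k \neq 0$, and $a^k$ is in every conjugate $gMg^{-1}$. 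Since $gMg^{-1} = \langle gag^{-1}\rangle$ is infinite cyclic and contains $a^k$, and $a^k$ also generates a finite-index subgroup of $M$, a standard argument in free (one-relator) groups — two infinite cyclic subgroups with non-trivial intersection have commensurable generators and in fact, inside a torsion-free context, must coincide up to finite index — should let me conclude $gMg^{-1}$ is commensurable with $M$ for every $g$; combined with Theorem \ref{coll0} (applied with $M = N$) which controls $gMg^{-1} \cap M$, I can upgrade ``commensurable for all $g$'' to ``equal for all $g$'', i.e. $M \trianglelefteq G$. Step 3: the remaining case is $|S| > 1$ and $G = NM = MM = M$ (using $M = N$), which immediately gives $M = N = G$.

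The main obstacle I anticipate is Step 2: passing from ``$a^k$ lies in every conjugate of $\langle a\rangle$'' to ``$\langle a\rangle$ is normal in $G$''. A priori a conjugate $\langle b \rangle$ containing $a^k$ need only satisfy $b^j = a^k$ for some $j$; I need to rule out $\langle a \rangle \subsetneq \langle b \rangle$ (equivalently, that $a$ is a proper power of some element of $G$) and more seriously I need that $b \in \langle a\rangle$. Here is where the structure of one-relator groups and Theorem \ref{coll0} should enter decisively: with $M = N$, Collins' theorem says $gMg^{-1} \cap M$ is either $F(S) = M$ itself (forcing $gMg^{-1} = M$ since both are the same rank-one free group and one contains the other up to finite index) or is a free product $F(S \cap S) * \mathbb{Z} = M * \mathbb{Z}$, which is impossible inside the abelian group $M$ unless it reduces to $M$. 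So in fact $gMg^{-1} \cap M$ non-trivial already forces $gMg^{-1} = M$ directly from Collins, bypassing the commensurability bookkeeping — this is the clean route, and I would verify carefully that the ``free product'' alternative genuinely cannot occur when $M$ is cyclic, which is where one must be slightly careful about Collins' hypotheses (cyclically reduced $w$, $S$ being a proper subset of the generators, etc.).
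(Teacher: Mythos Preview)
Your Steps 1 and 3 are correct and match the paper exactly: Proposition \ref{case2} forces $M=N$, and once $M=N$ the case $|S|>1$ together with $G\neq NM=M$ is eliminated by Corollary \ref{case1}, leaving either $M=N=G$ or $|S|=1$.

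The gap is in Step 2, precisely where you flagged it. Your proposed ``clean route'' misapplies Theorem \ref{coll0}: that theorem describes $M\cap N$ for two Magnus subgroups, not $gMg^{-1}\cap M$. A conjugate of a Magnus subgroup is generally not a Magnus subgroup, so Collins' first theorem says nothing about $gMg^{-1}\cap M$. Collins' second theorem (Theorem \ref{coll}) does address conjugates, but with $M=N$ cyclic it only tells you that $gMg^{-1}\cap M$ is cyclic or $g\in M$ --- the first alternative is vacuous here, so you get no leverage. Your commensurability observation (that the core element $a^k$ lies in every $gMg^{-1}$) is correct but, as you yourself note, does not by itself rule out $gMg^{-1}\supsetneq M$ or $gMg^{-1}\subsetneq M$.

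The paper closes this gap by a direct descending-chain argument, with no appeal to Collins: if $M$ is not normal, choose $g$ with $gMg^{-1}\cap M$ a \emph{proper} subgroup of $M$ (possible since $M\cong\mathbb{Z}$ and $gMg^{-1}\neq M$ for some $g$; replace $g$ by $g^{-1}$ if necessary). Then the nested intersections $\bigcap_{i=0}^{k} g^iMg^{-i}$ form a strictly decreasing chain of subgroups of $\mathbb{Z}$, so $\bigcap_{i\geq 0} g^iMg^{-i}$ is trivial, contradicting the non-triviality of the core. This replaces your attempted upgrade-via-Collins with an elementary index argument inside $\mathbb{Z}$.
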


\begin{proof} By Proposition \ref{case2}, we have $M\,=\, N$. 
Assume that $G \,\neq\, M$. Then $M = N$ is infinite cyclic by 
Corollary \ref{case1}. If $M$ is not normal, then there exists 
$g\,\in\, G$ such that $gMg^{-1} \bigcap M$ is a proper subgroup 
of $M$. Hence $\bigcap_{i=0}^\infty g^iMg^{-i}$ must be trivial. 
This completes the proof of the proposition.
\end{proof}

We shall need the following Theorem due to Karrass and Solitar in 
the proof of Theorem \ref{1rel} below.

\begin{theorem}[{\cite[p. 219]{ks}}]\label{ks} Let $G$ be a 
one-relator group having a (nontrivial) finitely presented normal 
subgroup $H$ of infinite index. Then $G$ is torsion-free and has 
two generators. Further, $G$ is an infinite cyclic or infinite dihedral 
extension of a finitely generated free group $N (\subset G)$ satisfying the following: 
\begin{itemize}
\item $H \,\subset\, N$ if $H$ is not
cyclic, and

\item $H\bigcap N$ is trivial if $H$ is cyclic.
\end{itemize}
In either case, there is a finite index subgroup $G_1$
of $G$ and a finitely generated free group $F$ fitting in an
exact sequence
$$
1 \longrightarrow F \longrightarrow  G_1\longrightarrow  
\Z\longrightarrow 1\, .
$$
\end{theorem}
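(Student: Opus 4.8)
The plan is to first use $L^2$--Betti numbers to force $G$ to be torsion-free on exactly two generators, then to invoke Theorem~\ref{cd2normal} to make $H$ free, and finally to extract the free normal subgroup $N$ from the Magnus--Moldavansky HNN decomposition of $G$; the last step is where the real work lies.

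First I would dispose of the degenerate cases: if $G$ has a single generator it is cyclic, and neither $\Z$ nor a finite cyclic group admits a nontrivial finitely presented normal subgroup of infinite index, so the statement is vacuous; hence $G$ has at least two generators. A one-relator group on at least two generators has no nontrivial finite normal subgroup (clear when $G$ is torsion-free, and a consequence of B.~B.~Newman's structure theory of finite subgroups when $G$ has torsion), so $H$ is infinite, and $Q=G/H$ is infinite by hypothesis. By Lück's theorem, a group containing an infinite finitely generated normal subgroup with infinite quotient has vanishing first $L^2$--Betti number; on the other hand, by Dicks--Linnell the first $L^2$--Betti number of a one-relator group $\langle x_1,\dots,x_n\mid s^m\rangle$, with $s$ not a proper power, equals $n-1-\tfrac1m$. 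Equating these gives $n=2$ and $m=1$, so $G$ is torsion-free with two generators. If the relator were trivial or a primitive element, $G$ would be free of rank $\le 1$ and again admit no such $H$; so the relator is neither, the presentation $2$--complex is aspherical (Lyndon), and $\mathrm{cd}\,G=2$. Then Theorem~\ref{cd2normal}, applied to the finitely presented normal subgroup $H$ of infinite index in the two-dimensional group $G$, shows $H$ is free, hence free of finite rank.

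The heart of the matter is producing $N$. Since $b_1(G)\ge 1$, after a Nielsen transformation I may assume $x_1$ has exponent sum zero in the relator $w$; Theorem~\ref{mm} then presents $G$ as an HNN extension $G=\langle B,t\mid A^t=C\rangle$ over Magnus (hence free) subgroups $A,C$ of a one-relator group $B$. A class in $H^1(G;\Z)$ gives a surjection $\psi\colon G\to\Z$, and Reidemeister--Schreier rewriting of $w$ presents $\ker\psi$ as an ascending union of one-relator groups amalgamated along Magnus subgroups. I would argue that, for a correctly chosen class --- and, in the twisted case, after replacing $\Z$ by $D_\infty$ on an index-two subgroup --- the normality and finite generation of $H$ force this chain to stabilize, so that $\ker\psi$ is finitely generated; being finitely presented, normal, and of infinite index in the two-dimensional group $G$, it is free of finite rank by Theorem~\ref{cd2normal} again. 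This kernel is the desired $N$, with $G/N\in\{\Z,D_\infty\}$, and comparing $\psi|_H$ with Collins' description of intersections of Magnus subgroups (Theorems~\ref{coll0} and \ref{coll}) yields the alternative $H\subseteq N$ when $H$ is non-cyclic and $H\cap N=\{1\}$ when $H$ is cyclic. The hard part will be precisely this stabilization --- equivalently, controlling which cohomology classes have finitely generated kernel and reconciling that choice with the transversality requirement on $H$ --- and it is exactly there that the Magnus method and the malnormality-type behaviour of Magnus subgroups are indispensable.

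The final assertion is then immediate: from the extension $1\to N\to G\to R\to 1$ with $R\in\{\Z,D_\infty\}$, take $G_1=G$ and $F=N$ when $R=\Z$; and when $R=D_\infty$, let $G_1$ be the preimage of the index-two infinite cyclic subgroup of $D_\infty$, so that $[G:G_1]=2$ and $1\to N\to G_1\to\Z\to 1$ with $F=N$ free of finite rank.
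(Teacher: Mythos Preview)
The paper does not prove Theorem~\ref{ks}: it is quoted verbatim from Karrass--Solitar \cite{ks} and used as a black box in the proof of Theorem~\ref{1rel}. So there is no in-paper argument to compare your proposal against.

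That said, a few remarks on the proposal itself. Your opening reduction via $L^2$--Betti numbers is a genuine alternative to the combinatorial argument in \cite{ks}: L\"uck's vanishing theorem together with the Dicks--Linnell formula $b_1^{(2)}(G)=n-1-1/m$ does force $n=2$ and $m=1$ in one stroke, and this is cleaner than the 1978 route. The appeal to Theorem~\ref{cd2normal} to make $H$ free is also fine once $\mathrm{cd}\,G=2$ is established.

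Where your proposal has a real gap is exactly where you flag it: the production of the finitely generated free normal subgroup $N$ with $G/N\in\{\Z,D_\infty\}$. You assert that ``the normality and finite generation of $H$ force this chain to stabilize'' in the Reidemeister--Schreier tower, but you do not prove it, and this stabilization is precisely the content of the Karrass--Solitar argument. Knowing that $G$ is a two-generator torsion-free one-relator group and that \emph{some} finitely presented normal subgroup $H$ of infinite index exists does not, by itself, hand you a character $\psi\colon G\to\Z$ with finitely generated kernel; one must actually exhibit such a $\psi$ (or its dihedral analogue) and argue why its kernel is finitely generated. Your sketch conflates two things: the HNN decomposition from Theorem~\ref{mm}, which always exists once some generator has exponent sum zero, and the much stronger claim that the associated edge groups can be made to exhaust the vertex group. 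Similarly, the dichotomy $H\subset N$ versus $H\cap N=\{1\}$ does not drop out of Theorems~\ref{coll0}--\ref{coll} in the way you suggest; in \cite{ks} it comes from a direct analysis of how $H$ sits inside the HNN structure. Until that stabilization step is actually carried out, the proposal is a plan rather than a proof.
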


\begin{theorem} \label{1rel}
Let $G \,=\, \langle x_1\, ,\cdots \, , x_n\,\mid\, w  \rangle$ 
be a one-relator K\"ahler group. Then either $G$ is virtually a 
surface group, or there is a finite index subgroup $G_1$
of $G$ and  a finitely generated free group $F$ fitting in an
exact sequence
$$
1 \longrightarrow F \longrightarrow  G_1\longrightarrow
\Z\longrightarrow 1\, . 
$$
\end{theorem}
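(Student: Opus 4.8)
The plan is to induct on the length of the defining relator $w$, using the Magnus--Moldavansky structure theory (Theorem \ref{mm}) to realize $G$ as an HNN extension of a shorter one-relator group. First I would reduce to the case where some generator $x_1$ has exponent sum zero in $w$: if no generator has exponent sum zero, then the abelianization of $G$ has rank at most something forcing a contradiction with K\"ahlerness unless $n=2$ and we can pass to a finite-index subgroup (or to a suitable one-relator group obtained by a Magnus rewriting trick) in which a generator does have exponent sum zero. Once $x_1$ has exponent sum zero, Theorem \ref{mm} gives $G = H_{\ast F}$ with $F$ a non-trivial free group and the associated subgroups Magnus subgroups $M = F(S)$, $N = F(T)$ of $H$.

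Next I would analyze the Bass--Serre tree $T$ of this HNN splitting, on which $G$ acts with edge groups conjugates of $F$. There are two cases. If the action is non-elementary with more than two ends (equivalently, $T$ is not quasi-isometric to a line), then I want to invoke Proposition \ref{bmst}: I need to produce a short exact sequence $1 \to N' \to G \to Q \to 1$ with $N'$ finitely generated and $Q$ acting with more than two ends, or exhibit a strong-stable cut of $Q$ whose conjugates intersect trivially. The combinatorial input here is Corollary \ref{case1}/Proposition \ref{case3}: unless $M = N$ and $M$ is either all of $G$ or an infinite cyclic normal subgroup, the intersection of all conjugates of $M$ together with all conjugates of $N$ is trivial, which should let me produce the required cut subgroup (this is the ``refinement of stable cuts'' advertised in the overview). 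So in the non-elementary case I should land in the ``virtually a surface group'' conclusion.

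In the remaining case the action on $T$ is elementary, i.e.\ $T$ is quasi-isometric to $\mathbb{R}$, so $G$ (virtually, after perhaps passing to the kernel of the action on the ends of $T$) surjects onto $\mathbb{Z}$ with finitely generated kernel; by Theorem \ref{mm} again, iterating, this kernel is built from free groups, and one sees $G$ is the mapping torus of a free group endomorphism, hence we get the exact sequence $1 \to F \to G_1 \to \mathbb{Z} \to 1$ on a finite-index subgroup. Concretely, the condition that the HNN extension has $M = N = H$ forces $G$ to be an ascending HNN extension of a free group; finite generation of $G$ together with the cd $2$ structure (the kernel being a finitely presented normal subgroup of a two-dimensional group, hence free by Theorem \ref{cd2normal}) pins down the free group $F$. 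I expect the main obstacle to be the bookkeeping in the reduction step that arranges some generator to have exponent sum zero while staying inside the K\"ahler category, and — relatedly — making sure that the Magnus subgroups arising in the induction genuinely satisfy the hypotheses of Corollary \ref{case1} (namely that $S$ has more than one element and $G \neq NM$), since degenerate cases where a Magnus subgroup is cyclic or equals $G$ are exactly what feeds the elementary (mapping torus) alternative rather than the surface-group one.
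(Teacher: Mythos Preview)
Your outline has the right ingredients but the logical flow is off in a way that matters, and two steps you flag as obstacles are non-issues.

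First, there is no induction on relator length, and the ``reduction to exponent sum zero'' is not an obstacle at all: Lemma~\ref{ara} (Arapura) already says that in a one-relator K\"ahler group \emph{every} generator has exponent sum zero in $w$. This is literally the first sentence of the paper's proof. So the Magnus--Moldavansky decomposition (Theorem~\ref{mm}) applies immediately.

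Second, and more importantly, you have the role of the Magnus-intersection results (Corollary~\ref{case1}, Proposition~\ref{case3}) backwards. In the non-elementary case (Bass--Serre tree $T$ not quasi-isometric to $\mathbb{R}$), the tree has infinitely many ends and is non-amenable, so the vertex group $H$ is automatically a stable cut subgroup of $G$ in the sense of Delzant--Gromov; no Magnus-subgroup input is needed to produce the cut. Delzant--Gromov (or Proposition~\ref{bmst}) then gives a surjection $G \twoheadrightarrow \Gamma_g$ onto a surface group with kernel $N \subset H$. If $N$ is finite you are done. The Magnus-intersection machinery enters only when $N$ is \emph{infinite}: since $N$ is normal in $G$ and contained in $H$, it lies in $\bigcap_{g\in G} gHg^{-1}$, and in particular in the intersection of all conjugates of the associated Magnus subgroups $M_1,M_2$. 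Proposition~\ref{case3} then forces either $M_1=M_2=H$ (so $G$ is free-by-$\mathbb{Z}$ directly) or $M_1=M_2$ is infinite cyclic normal in $H$, whence $N$ is infinite cyclic normal in $G$. In that last sub-case you need Theorem~\ref{ks} (Karrass--Solitar on one-relator groups with a finitely presented normal subgroup), which your proposal does not mention, to conclude that $G$ is virtually free-by-$\mathbb{Z}$. So the non-elementary branch does \emph{not} always land in ``virtually a surface group''; it can feed back into the mapping-torus alternative, and Karrass--Solitar is the tool that closes that loop.
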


\begin{proof}
By Lemma \ref{ara}, the exponent sum of $x_1$ in $w$ is zero. 
Hence from Theorem \ref{mm} it follows that there exists a 
finitely generated one-relator group $H$ such that $G$ is an HNN 
extension $G\,=\, H_{\ast F}$, where  $F$ is a non-trivial free 
group. Also, the associated subgroups $M_1$ and $M_2$ are Magnus 
subgroups of $H$.

Let $T$ be the Bass--Serre tree of the splitting of $G$ over $H$. 
If $T$ is quasi-isometric to $\mathbb{R}$, then there is a finite 
index subgroup $G_1$ of $G$ and  a finitely generated free group 
$F$ fitting in an exact sequence
$$
1 \longrightarrow F \longrightarrow  G_1\longrightarrow
\Z\longrightarrow 1\, ,
$$
where $F$ is of finite index in both $M_1$ and $M_2$; therefore 
$H$ is free. 

Assume that $T$ is not quasi-isometric to $\mathbb{R}$. Then $H$ 
is a stable cut subgroup in the sense of 
\cite{dg}. This is because $T$ must have infinitely many ends and hence be non-amenable.
Hence by \cite{dg} (or by Proposition \ref{bmst}), 
there is a surjective homomorphism from $G$ to a surface group 
$\Gamma_g$ of genus $g > 0$ with  kernel $N\, \subset
\, H$. If $N$ is finite, then $G$ is virtually a surface group.

Assume that $N$ is infinite. Let $t$ be the stable letter of the 
HNN extension $G\,=\, H_{\ast F}$. Hence $N \subset H \bigcap 
tHt^{-1} \,\subset\, M_1 \bigcap M_2$.
Since $N$ is non-trivial, from Proposition \ref{case3} we 
conclude that
\begin{enumerate}
\item[(a)] either $M_1\,=\,M_2\,=\,H$,
\item[(b)] or $M_1\,=\,M_2$ is an infinite cyclic normal 
subgroup of $H$.
\end{enumerate}

In Case (a), the group $G$ fits in $1\longrightarrow F 
\longrightarrow G\longrightarrow \Z\longrightarrow 1$, where
$F\, =\,M_1\,=\,M_2\,=\, H$ is free.

In Case (b), the subgroup $N\,\subset\, M_1 \bigcap M_2$ must be 
infinite cyclic, in particular, $G$ admits an infinite cyclic 
normal subgroup. Hence by Theorem \ref{ks}, there is a finite 
index subgroup $G_1$ of $G$ and a finitely generated free group 
$F$ fitting in an exact sequence $1\longrightarrow 
F\longrightarrow G_1\longrightarrow \Z\longrightarrow 1$. 
This proves the theorem.
\end{proof}

In the next section we shall rule out the second possibility in
Theorem \ref{1rel}.

\section{Coherent Groups}

\subsection{Torsion-free one-relator K\"ahler groups}

A finitely presented group $G$ is said to be {\it coherent} if 
every finitely generated subgroup of $G$ is
finitely presented. We shall be requiring the following deep 
theorem due to Feighn and Handel:

\begin{theorem}[\cite{fh}]\label{fh}
Let $G$ be a finitely presented group admitting a description
$$1\longrightarrow F \longrightarrow G\longrightarrow 
\Z\longrightarrow 1\, ,$$
where $F$ is finitely generated free. Then $G$ is coherent.
\end{theorem}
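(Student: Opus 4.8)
The plan is to follow the topological strategy of Feighn and Handel: model $G$ as the fundamental group of a graph mapping torus and then control an arbitrary finitely generated subgroup by analyzing the dynamics of the induced self-map on the corresponding (generally infinite) cover. Since $F$ is normal in $G$ with quotient $\Z$, the group $G$ is the mapping torus of an automorphism $\phi$ of $F$; in particular $G$ is automatically finitely presented (an extension of a finitely presented group by $\Z$ is finitely presented), so that hypothesis in the statement needs no separate verification. Realize $\phi$ by a homotopy equivalence $f\colon\Gamma\to\Gamma$ of a finite graph $\Gamma$ with $\pi_1(\Gamma)\cong F$, so that $G\cong\pi_1(M_f)$ where $M_f = (\Gamma\times[0,1])/((x,1)\sim(f(x),0))$. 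Now let $H\le G$ be finitely generated and restrict the projection $G\to\Z$ to $H$. If this map is trivial then $H$ is a finitely generated subgroup of the free group $F$, hence free, hence finitely presented; otherwise its image is $d\Z$ for some $d\ge 1$, and passing to the $d$-fold cyclic cover of $M_f$ --- equivalently, replacing $f$ by $f^d$, which enlarges $G$ only to a finite-index overgroup and does not affect coherence --- we may assume that $H$ surjects onto $\Z$.

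In that case $H$ is itself a graph mapping torus. Writing $K := H\cap F$ and choosing $t\in H$ mapping to a generator of $\Z$, conjugation by $t$ is an automorphism $\psi$ of $K$, and $H$ is the mapping torus of $\psi$. Topologically, $K$ corresponds to a cover $\widetilde\Gamma\to\Gamma$, the map $f$ lifts to a homotopy equivalence $\tilde f\colon\widetilde\Gamma\to\widetilde\Gamma$ realizing $\psi$, and the cover of $M_f$ with fundamental group $H$ is the mapping torus $M_{\tilde f}$. The subtlety is that $\widetilde\Gamma$ need not be finite up to homotopy: $K$ may be free of infinite rank (this already occurs for suitable finitely generated subgroups of $F_2\times\Z$), so we are not merely compressing $M_{\tilde f}$ onto a finite subcomplex. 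The aim instead is to read a finite presentation of $H$ directly off the monodromy. The key point is that, although $K$ can be infinitely generated as a group, it is finitely generated as a group equipped with the action of $\psi$ (because $H$ is finitely generated); what must be shown is that a finite set of relations is likewise generated, under $\psi$, by finitely much data. Concretely one wants a finite subgraph $\Delta\subseteq\widetilde\Gamma$ together with a uniform, eventually linear description of how $\tilde f$ and its iterates move $\Delta$ about inside $\widetilde\Gamma$; a finite generating set and a finite set of defining relations for $H$ can then be extracted from this normal form.

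The main obstacle --- and the only step requiring substantial machinery --- is obtaining this uniform dynamical control, since a priori $\tilde f$ is a self-map of an infinite graph with no bound on cancellation under iteration. Here I would take $f$ to be a relative train track representative of $\phi$ in the sense of Bestvina and Handel, together with its filtration by invariant subgraphs, so that on each exponential stratum legal paths grow without cancellation up to a bounded cancellation constant, and on each polynomial stratum lengths grow only linearly. Lifting the filtration to $\widetilde\Gamma$, one argues by induction up the strata: at each stage finite generation of $H$ confines the part of $\widetilde\Gamma$ not already controlled to finitely many $\psi$-orbits of cells, the train track estimates bound how these interact under $\tilde f^{\pm n}$, and one produces the required finite subgraph and the eventually linear normal form for the monodromy relative to the earlier strata. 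Assembling this data across all strata gives the finite presentation of $H$, and hence the coherence of $G$. This induction, in which the bounded cancellation and growth estimates for relative train tracks do the real work, is the technical heart of the Feighn--Handel argument, and I do not know how to circumvent it; once the structural statement is in place, the passage back to finite presentability of $H$ is routine bookkeeping.
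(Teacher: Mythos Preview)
The paper does not prove this statement at all: it is quoted verbatim as a result of Feighn and Handel \cite{fh} and used as a black box in the proof of Proposition~\ref{fxz}. So there is no ``paper's own proof'' to compare against; any argument you supply is necessarily going beyond what the paper does.

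That said, your sketch is a faithful high-level outline of the Feighn--Handel strategy: model $G$ as the mapping torus of a relative train track map on a finite graph, pass to the cover corresponding to a finitely generated $H\le G$, reduce to the case where $H$ surjects onto $\Z$, and then use the stratified dynamics (bounded cancellation on exponential strata, linear growth on polynomial strata) to extract a finite presentation for $H$ by induction on the filtration. You are honest that the inductive step is where the real work lies and that you are not reproducing it; as a roadmap this is accurate. One small slip: replacing $f$ by $f^d$ passes to the $d$-fold cyclic cover, whose fundamental group is a finite-index \emph{subgroup} $G_d\le G$, not an overgroup. The logic is unaffected---$H$ sits inside $G_d$ and surjects onto its $\Z$-quotient, and showing $H$ is finitely presented there suffices---but the word ``overgroup'' should be ``subgroup''.
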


A group is said to 
be {\em curve-dominating} if it possesses a surjective 
homomorphism onto the fundamental group $\Gamma_g$ of a closed 
Riemann surface of genus $g\,>\, 1$. 

\begin{theorem}[{\cite[Sections 7J, K]{arapura-exp}}]\label{ara-curve} 
Let $G$ be a curve-dominating K\"ahler group.
Then $$\dim H_1 (G^\prime , \, \mathbb{Q} ) \,=\, \infty\, ,$$ 
where $G^\prime$ denotes the commutator subgroup of $G$.
Conversely, if $\dim H_1 (G^\prime ,\, \mathbb{Q} )\,=\, \infty$, 
then $G$ contains a curve-dominating
subgroup of finite index. \end{theorem}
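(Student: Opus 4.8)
My plan is to prove the two implications separately; the K\"ahler hypothesis enters only in the converse. For the direct implication, suppose $\phi\colon G\,\longrightarrow\,\Gamma_g$ is surjective with $g\,>\,1$. A surjective homomorphism carries the commutator subgroup onto the commutator subgroup, so $\phi$ restricts to a surjection $G'\,\longrightarrow\,\Gamma_g'$, and applying $H_1(-\,,\mathbb{Q})$ (which sends surjections to surjections) gives a surjection $H_1(G'\,,\mathbb{Q})\,\longrightarrow\, H_1(\Gamma_g'\,,\mathbb{Q})$. It therefore suffices to show $\dim_{\mathbb{Q}}H_1(\Gamma_g'\,,\mathbb{Q})\,=\,\infty$. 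But $\Gamma_g'$ is the fundamental group of the regular cover of the closed orientable genus-$g$ surface $\Sigma_g$ with deck group $H_1(\Sigma_g\,,\Z)\,=\,\Z^{2g}$; that cover is a connected non-compact surface, hence $\Gamma_g'$ is free, and it is of infinite rank because the cover has infinite genus (equivalently, because the Alexander module of $\Sigma_g$ over $\mathbb{Q}[\Z^{2g}]$ has positive-dimensional support for $g\,\geq\, 2$). So $H_1(\Gamma_g'\,,\mathbb{Q})$, and a fortiori $H_1(G'\,,\mathbb{Q})$, is infinite-dimensional. No K\"ahler hypothesis is needed for this half.

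For the converse I would pass from homology to jump loci. Assume $G\,=\,\pi_1(X)$ for a compact K\"ahler manifold $X$ and $\dim_{\mathbb{Q}}H_1(G'\,,\mathbb{Q})\,=\,\infty$. Since $G$ is finitely presented, $A\,:=\,H_1(G'\,,\mathbb{Q})$ is a finitely generated module over the Noetherian ring $\Lambda\,:=\,\mathbb{Q}[H_1(G\,,\Z)]$; over $\mathbb{C}$ the spectrum of $\Lambda$ is a finite disjoint union of copies of $(\mathbb{C}^{*})^{b_1(G)}$ indexed by the characters of the torsion subgroup of $H_1(G\,,\Z)$, and a finitely generated $\Lambda$-module has finite $\mathbb{Q}$-dimension exactly when its support is a finite set. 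Hence $\operatorname{Supp}(A\otimes\mathbb{C})$ has an irreducible component of dimension $\geq 1$. The standard comparison between the Alexander module and the first cohomology jump locus
\[
\Sigma^1(X)\ =\ \{\,\rho\,\in\,\operatorname{Hom}(G\,,\mathbb{C}^{*})\ :\ H^1(G\,,\mathbb{C}_\rho)\,\neq\, 0\,\}
\]
then forces $\Sigma^1(X)$ to contain a positive-dimensional component, which is moreover a translate of a subtorus by a torsion character.

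The conclusion then follows from the structure theorem for cohomology jump loci of compact K\"ahler manifolds (Beauville, Green--Lazarsfeld, Simpson, Arapura): every positive-dimensional component of $\Sigma^1(X)$ has the form $\rho\cdot f^{*}H^1(C\,,\mathbb{C}^{*})$, with $\rho$ a torsion character and $f\colon X\,\longrightarrow\, C$ a surjective holomorphic map with connected fibers onto a smooth projective curve carrying an orbifold structure with $\chi^{\mathrm{orb}}(C)\,<\, 0$; hence $\Delta\,:=\,\pi_1^{\mathrm{orb}}(C)$ is a cocompact Fuchsian group of negative Euler characteristic and $f$ induces a surjection $\psi\colon G\,\longrightarrow\,\Delta$. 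By Selberg's lemma $\Delta$ has a torsion-free finite-index subgroup, which, being torsion-free cocompact Fuchsian, equals $\Gamma_h$ for some $h$, and $h\,>\,1$ because its Euler characteristic is negative. Setting $G_1\,:=\,\psi^{-1}(\Gamma_h)$, the subgroup $G_1$ has finite index in $G$ and $\psi$ restricts to a surjection $G_1\,\longrightarrow\,\Gamma_h$; thus $G_1$ is a curve-dominating subgroup of finite index.

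The main obstacle is exactly the structure theorem invoked in the last paragraph: this is where the K\"ahler hypothesis does all the work, through generic vanishing and the classification of the subtori of the Albanese torus along which $H^1$ can jump. The only additional point to watch --- but one already built into that theorem --- is that a \emph{positive-dimensional} component forces $\chi^{\mathrm{orb}}(C)\,<\, 0$ rather than $\chi^{\mathrm{orb}}(C)\,=\,0$: a pencil over an elliptic base with no multiple fibers would yield only a $\Z^2$-quotient and would not be curve-dominating, but such pencils contribute at most isolated points to $\Sigma^1(X)$. Once $\chi^{\mathrm{orb}}(C)\,<\,0$ is in hand, the reduction to an honest genus-$>1$ surface subgroup is routine Fuchsian-group theory.
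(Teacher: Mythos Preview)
The paper does not give its own proof of this theorem: it is quoted as a result of Arapura, with a bare citation to \cite{arapura-exp}, Sections~7J--K, and is then used as a black box in the proof of Proposition~\ref{fxz}. There is therefore no argument in the paper to compare yours against.

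That said, your sketch is correct and is essentially the argument one finds in Arapura's paper and the subsequent literature. The forward implication is elementary, exactly as you write it: no K\"ahler hypothesis is needed, and the commutator subgroup of $\Gamma_g$ is free of infinite rank for $g\ge 2$. For the converse you correctly translate infinite-dimensionality of the rational Alexander module into the existence of a positive-dimensional component of the first characteristic variety $\Sigma^1(X)$, and then invoke the structure theorem for cohomology jump loci of compact K\"ahler manifolds (Beauville, Green--Lazarsfeld, Simpson, Arapura) identifying such a component with the pullback from an orbifold curve of negative orbifold Euler characteristic; Selberg's lemma then yields the finite-index curve-dominating subgroup. Your observation that an elliptic base without multiple fibres contributes only the trivial character to $\Sigma^1$ is exactly the reason a \emph{positive-dimensional} component forces $\chi^{\mathrm{orb}}<0$. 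This is precisely Arapura's route.
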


\begin{prop}\label{fxz}
Let $G$ be a finitely presented group admitting a description
$$1\longrightarrow F \longrightarrow G\longrightarrow
\Z\longrightarrow 1\, ,$$ where $F$ 
is  finitely generated free. If $G$ is K\"ahler, then $G\,=\,\Z \oplus \Z$.
\end{prop}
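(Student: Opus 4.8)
The plan is to use the structure of $G$ as an ascending or non-ascending HNN extension of the finitely generated free group $F$ together with the Kähler hypothesis, and rule out everything except the abelian case. First I would observe that $G$ is coherent by Theorem \ref{fh}, and that $\mathrm{cd}(G) \leq 2$: indeed $\mathrm{cd}(F) \leq 1$ since $F$ is free, and the extension by $\Z$ raises cohomological dimension by at most one (Theorem \ref{cdplus}, or directly from the Lyndon--Hochschild--Serre spectral sequence). So $G$ is a two-dimensional group. If $F$ is trivial then $G = \Z$, contradicting that we are in the $n \geq 2$ setting; if $F$ has rank one then $G$ is a $\Z$-by-$\Z$ group, hence either $\Z \oplus \Z$ or the Klein bottle group — but the latter is not Kähler (its first Betti number is $1$, which is odd), so $G = \Z \oplus \Z$ and we are done. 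The substantive case is therefore $\mathrm{rank}(F) \geq 2$.

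Next I would bring in the Kähler restrictions on $b_1$ and the commutator subgroup. Since $G$ maps onto $\Z$, we have $b_1(G) \geq 1$, and since $G$ is Kähler, $b_1(G)$ is even, so $b_1(G) \geq 2$. The key dichotomy is whether $G$ is curve-dominating, i.e.\ whether $\dim_{\Q} H_1(G', \Q) = \infty$ (Theorem \ref{ara-curve}). I would argue that when $\mathrm{rank}(F) \geq 2$, the commutator subgroup $G'$ has infinite-dimensional rational first homology: the monodromy automorphism $\phi$ of $F$ induced by the $\Z$-action acts on $H_1(F,\Q) = \Q^r$ with $r \geq 2$, and one analyzes the action on $H_1(G',\Q)$, which as a $\Q[t^{\pm 1}]$-module is the Alexander module of the fibration. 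If this module is infinite-dimensional over $\Q$, then $G$ has a finite-index curve-dominating subgroup $G_1$, which still fits in an exact sequence $1 \to F_1 \to G_1 \to \Z \to 1$ with $F_1$ finitely generated free (a finite-index subgroup of such a $G$ has this form). Then $G_1$ surjects onto a genus $\geq 2$ surface group $\Gamma_g$, whose kernel $K$ is a normal subgroup of the two-dimensional group $G_1$; by Theorem \ref{cd2normal}, if $K$ has infinite index and is finitely presented it is free — but one checks via cohomological dimension (Theorem \ref{cdplus} applied to $1 \to K \to G_1 \to \Gamma_g \to 1$, noting $\mathrm{cd}(\Gamma_g) = 2$) that $\mathrm{cd}(G_1) \geq 2 + \mathrm{cd}(K)$, forcing $\mathrm{cd}(K) = 0$, i.e.\ $K$ is trivial, so $G_1 \cong \Gamma_g$ is itself a surface group, which does \emph{not} fit in an extension with quotient $\Z$ and free kernel unless $g = 1$ — contradiction with $g \geq 2$. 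Hence the curve-dominating case cannot occur when $\mathrm{rank}(F) \geq 2$, \emph{provided} the Alexander module is indeed infinite-dimensional.

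The remaining possibility is $\mathrm{rank}(F) \geq 2$ but $G$ not curve-dominating, so $\dim_{\Q} H_1(G',\Q) < \infty$. I would then show this is impossible as well: a mapping torus of a free group of rank $\geq 2$ whose Alexander module is finite-dimensional over $\Q$ is very restricted — the monodromy must act on $H_1(F,\Q)$ with all eigenvalues roots of unity and in fact the module structure forces the group to be (virtually) a surface group or to fail the cohomological/$b_1$ constraints; one can run a parallel argument using Theorem \ref{cd2normal} with $F$ itself as the normal subgroup (infinite index, finitely presented since free) to control the structure, and combine with the Kähler parity of $b_1$.

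The main obstacle I expect is the middle step: pinning down precisely when the Alexander module $H_1(G',\Q)$ is infinite-dimensional, and organizing the two sub-cases so that $\mathrm{rank}(F) \geq 2$ is genuinely excluded in \emph{both}. The clean statement one wants is: if $1 \to F \to G \to \Z \to 1$ with $F$ free of rank $\geq 2$ and $G$ Kähler, then a finite-index subgroup is curve-dominating, which then collides with $\mathrm{cd} = 2$; making the "finite-dimensional Alexander module" branch disappear rigorously — rather than by hand-waving about eigenvalues — is the delicate part, and I would expect the paper to route around it using Theorem \ref{cd2normal} and coherence more cleverly than a direct module computation.
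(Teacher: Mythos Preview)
Your overall architecture is right, but there is one needless detour and one genuine gap.

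\textbf{The detour.} The ``delicate part'' you flag --- the possibility that $\dim_{\mathbb Q} H_1(G',\mathbb Q)<\infty$ when $\mathrm{rank}(F)\ge 2$ --- never occurs, and the paper disposes of it in one line. Since $G/F\cong\Z$ is abelian, $G'\subset F$. Since $b_1(G)$ is even and $\ge 1$, it is $\ge 2$, so $F/G'$ (which sits inside $G^{ab}$) has rank $\ge 1$ and hence $[F:G']=\infty$. Since $G$ is non-abelian, $G'\neq 1$. Now use the classical fact that a nontrivial \emph{normal} subgroup of infinite index in a finitely generated free group is never finitely generated; thus $G'$ is free of infinite rank and $\dim_{\mathbb Q} H_1(G',\mathbb Q)=\infty$ automatically. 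No Alexander-module or eigenvalue analysis is needed, and your dichotomy collapses to a single case.

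\textbf{The gap.} Once a finite-index subgroup $G_1$ is curve-dominating, you take an \emph{abstract} surjection $G_1\to\Gamma_g$ and try to control its kernel $K$. But nothing in your argument shows $K$ is finitely generated, so coherence cannot be invoked to get finite presentation, Theorem \ref{cd2normal} does not apply, and Theorem \ref{cdplus} (which requires duality groups) is unavailable. The paper supplies the missing finiteness geometrically: Siu--Beauville produces a holomorphic map $X\to S$ with connected fibers to a curve of genus $>1$, and after a finite cover one arranges no multiple fibers (citing \cite{cat-fib}); compactness and connectedness of the fibers then force the kernel $N$ of the induced $\pi_1$-map to be finitely generated. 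From there coherence (Theorem \ref{fh}) gives $N$ finitely presented, Theorem \ref{cd2normal} gives $N$ free, and Theorem \ref{cdplus} gives $\mathrm{cd}(G_1)=1+2=3$, contradicting $\mathrm{cd}(G)=1+1=2$ from the original free-by-$\Z$ description. Your proposed endgame (conclude $K$ trivial, hence $G_1\cong\Gamma_g$, hence not free-by-$\Z$) is not needed: the clash $3\neq 2$ is already the contradiction.
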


\begin{proof}
Suppose that $G\,\not=\,\Z \oplus \Z$. Then $F$ has 
rank greater than one; the only other case is that of the 
fundamental group of a Klein bottle, which has first Betti number 
one and hence it cannot be K\"ahler. As $G/F$ is abelian, it 
follows that $G^\prime\,\subset\, F$ (as before, $G^\prime$ is 
the commutator subgroup of $G$). Hence $G^\prime$ is a 
normal subgroup of $F$. Further, since $G$ is non-abelian (as $F$ 
has rank greater than one), we have
$G^\prime\,\neq\, \{ 1 \}$. As subgroups of free groups are 
free, it follows that $G^\prime$ is a free group.

Since $b_1(G)$ is even and $b_1(G)\geq 1$, it follows that  
$G^\prime \,\subset\, F$ must be of infinite index in $F$. Hence  
$\dim H_1 (G^\prime ,\, \mathbb{Q} ) \,=\, \infty$. So
$G$ contains a curve-dominating subgroup of finite index by 
Theorem \ref{ara-curve}.

By the Siu--Beauville Theorem \cite[p. 2, Theorem 1.5]{abckt}, 
there exists a 
holomorphic map $$\phi\,:\, X \,\longrightarrow\, S\, ,$$ from a 
compact K\"ahler manifold to a closed Riemann surface $S$ of 
genus $g\,>\,1$, with connected fibers. 

It now follows (cf. \cite[p. 283, Lemma 3]{cat-fib}) \footnote{We are
grateful to Dieter Kotschick for 
 informing  us that we need this refinement to get rid of multiple fibers 
and for our argument to work.} that there exists a finite-sheeted
cover $X_1$ of $X$ and  a closed Riemann surface $S_1$  (possibly different from $S$) of 
genus greater than one such that $\phi$  lifts to a
holomorphic map $$\phi_1\,:\, X_1 \,\longrightarrow\, S_1\, ,$$ from $X_1$
 to  $S_1$, with connected fibers and no multiple fibers.

 As the
fibers of $\phi$ are connected and compact and there are no multiple fibers, it follows that the 
kernel $N$ of $\phi_{1\ast}$ is finitely generated. By Theorem 
\ref{fh}, this group $N$ is finitely presented. Since $N\, 
\subset\, G_1$ 
has infinite index, it must be  finitely generated free by Theorem \ref{cd2normal}. 
Thus $G_1$ fits in an exact sequence $$1\,\longrightarrow\, F 
\,\longrightarrow\, 
G_1\,\longrightarrow\, \pi_1(S_1) \,\longrightarrow\, 1\, ,$$ 
where $F$ is finitely generated free and hence a one dimensional 
duality group. From Theorem \ref{cdplus} it follows that $G_1$ has 
cohomological dimension $1+2=3$. Hence (by Serre's theorem on finite index subgroups) $G$ has 
cohomological dimension $3$.

Again, since $G$ fits in an exact sequence $$1\,\longrightarrow\, 
F \,\longrightarrow\,
G\,\longrightarrow\, \Z \,\longrightarrow\, 1\, ,$$
where $F$ is  finitely generated free, it follows from Theorem \ref{cdplus} again
that $G$ has cohomological dimension $1+1=2$. This is in 
contradiction with the previous calculation.

Hence $F$ has rank one, and $G\,=\,\Z \oplus \Z$.
\end{proof}

Combining Theorem \ref{1rel} and Proposition \ref{fxz} we have:

\begin{theorem}\label{1relf}
Let $G \,=\, \langle x_1\, ,\cdots \, , x_n\,\mid\, w  \rangle$ 
be a one-relator K\"ahler group. Then $G$ is virtually a surface 
group. Further, if $G$ is torsion-free, then it is isomorphic to 
a surface group.
\end{theorem}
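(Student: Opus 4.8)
The plan is to combine the two results just established and then deal with torsion. By Theorem \ref{1rel}, applied to the one-relator K\"ahler group $G$, there are two cases: either $G$ is virtually a surface group — in which case we are done — or there is a finite index subgroup $G_1 \subset G$ and a finitely generated free group $F$ sitting in an exact sequence $1 \to F \to G_1 \to \Z \to 1$. So I would first observe that in the latter case $G_1$ is itself K\"ahler, since a finite index subgroup of a K\"ahler group is again K\"ahler (it is the fundamental group of a finite cover of the compact K\"ahler manifold realizing $G$). Now Proposition \ref{fxz} applies to $G_1$ and forces $G_1 \cong \Z \oplus \Z$. The point of this step is to eliminate the ``mapping torus of a free group'' alternative entirely, except for the degenerate case where the free group has rank one.

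Next I would extract what $G_1 \cong \Z \oplus \Z$ tells us about $G$. Since $G_1$ has finite index in $G$, the group $G$ contains $\Z \oplus \Z$ with finite index, so $G$ is virtually $\Z^2$. But $\Z^2$ is the fundamental group of the torus, a closed orientable surface of positive first Betti number, so $G$ is virtually a surface group in this case as well. Combining the two cases, $G$ is always virtually a surface group, which is the first assertion. I expect this part to be essentially bookkeeping once the K\"ahler-ness of $G_1$ is noted; the one subtlety is making sure that ``virtually a surface group'' in our definition (finite index subgroup is the fundamental group of a closed surface of positive first Betti number) genuinely covers the $\Z^2$ case, which it does via the torus.

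For the second assertion, suppose $G$ is torsion-free. We know $G$ is virtually a surface group, so it has a finite index subgroup $G_0$ which is the fundamental group of a closed surface $\Sigma$ of positive first Betti number. Since $G$ is K\"ahler and $b_1 > 0$ forces $b_1$ even and nonzero, $\Sigma$ is in fact orientable (a closed surface with positive first Betti number is either orientable of genus $g \ge 1$ or non-orientable; but a one-relator K\"ahler group with $b_1$ even rules out the non-orientable possibility at the level of the finite-index subgroup as well, since the non-orientable surface groups have odd first Betti number and are themselves not K\"ahler). Thus $G_0 \cong \Gamma_g$ for some $g \ge 1$. Now $G$ is a torsion-free group containing a surface group $\Gamma_g$ with finite index; since $\Gamma_g$ is a Poincar\'e duality group of dimension two (for $g \ge 1$; the torus case $g=1$ included), $G$ is a torsion-free virtual Poincar\'e duality group of dimension two, hence itself a Poincar\'e duality group of dimension two, and by the classification of such groups (they are precisely the closed aspherical surface groups, together with the fact that torsion-freeness plus the K\"ahler/even-$b_1$ constraint rules out the Klein bottle group) $G$ is itself a closed orientable surface group. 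The main obstacle here is the last implication — promoting ``virtually a surface group'' to ``is a surface group'' in the torsion-free case — which rests on the surface-subgroup theorem / the recognition of $\mathrm{PD}_2$ groups; I would cite the relevant structure theory rather than reproving it, and use the evenness of $b_1$ together with torsion-freeness to exclude the Klein bottle and the non-orientable cases.
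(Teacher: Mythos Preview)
Your approach is essentially the paper's: combine Theorem~\ref{1rel} with Proposition~\ref{fxz} (using that a finite-index subgroup of a K\"ahler group is K\"ahler, so that Proposition~\ref{fxz} applies to $G_1$), note that $\Z\oplus\Z$ is itself a surface group, and then for the torsion-free statement invoke the fact that a torsion-free group which is virtually a surface group is a surface group (via the $\mathrm{PD}_2$ classification). The paper's proof is a two-sentence version of exactly this.

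One factual slip: it is not true that non-orientable surface groups all have odd first Betti number. The group $N_k=\langle a_1,\ldots,a_k\mid a_1^2\cdots a_k^2\rangle$ has $b_1=k-1$, so $N_3,N_5,\ldots$ have even $b_1$, and evenness of $b_1$ alone rules out only the Klein bottle among the non-orientable cases. The clean fix is to observe that $H_1(N_k;\Z)\cong\Z^{k-1}\oplus\Z/2$ has $2$-torsion, whereas Lemma~\ref{ara}(2) forces the abelianization of a one-relator K\"ahler group to be free abelian (all exponent sums of generators in the relator vanish); hence no $N_k$ is K\"ahler. Alternatively, you can stop at ``$G$ is a surface group'' as the paper does in Theorem~\ref{1relf}; orientability is not asserted there and is only pinned down (implicitly via Lemma~\ref{ara}) on the way to Theorem~\ref{main}. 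Also, your detour about the orientability of the finite-index subgroup $G_0$ is unnecessary: every closed surface group (orientable or not) is $\mathrm{PD}_2$, which is all the promotion argument needs.
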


\begin{proof}
The first statement in the theorem is a direct consequence of 
Theorem \ref{1rel} and Proposition \ref{fxz} because $\Z \oplus 
\Z$ is a surface group. The last statement follows from the fact 
that a torsion-free group that is virtually a surface group is 
actually a surface group.
\end{proof}

The case where $G$ has torsion will be dealt in Section 
\ref{sec-t}.

The proof of Proposition \ref{fxz} actually gives the following:

\begin{cor}\label{fxzcor}
Let $G_0$ be a coherent K\"ahler group of rational cohomological
dimension two such that a finite index subgroup $G$ of $G_0$ 
admits a surjective homomorphism $$\phi_\ast\,: \,G \,
\longrightarrow\, \pi_1(S)\, ,$$ where $S$ is a closed oriented 
2-dimensional surface of genus $g\,>\,1$. Then $G_0$ is virtually 
a (real two-dimensional) surface group.
\end{cor}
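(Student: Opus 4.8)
The plan is to follow almost verbatim the argument already given for Proposition \ref{fxz}, extracting from it only the structural facts it actually used. First I would observe that the hypotheses give a finite index subgroup $G\,\subset\, G_0$ with a surjection $\phi_\ast\,:\,G\,\longrightarrow\,\pi_1(S)$ onto a genus $g>1$ surface group, so $G$ is curve-dominating, and being of finite index in the K\"ahler group $G_0$ it is itself K\"ahler. By the Siu--Beauville Theorem there is a holomorphic map $\phi\,:\,X\,\longrightarrow\, S$ with connected fibers from a compact K\"ahler manifold $X$ with $\pi_1(X)\,=\,G$ to a closed Riemann surface of genus $>1$, inducing $\phi_\ast$ up to finite ambiguity. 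Then, exactly as before, I would pass to a finite-sheeted cover $X_1\,\longrightarrow\, X$ and a surface $S_1$ of genus $>1$ so that $\phi$ lifts to $\phi_1\,:\,X_1\,\longrightarrow\, S_1$ with connected fibers and no multiple fibers (the Kotschick refinement of \cite[Lemma 3]{cat-fib}), and set $G_1\,=\,\pi_1(X_1)$, a finite index subgroup of $G$, hence of $G_0$.

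Next, since the fibers of $\phi_1$ are connected, compact, and there are no multiple fibers, the kernel $N$ of $\phi_{1\ast}\,:\,G_1\,\longrightarrow\,\pi_1(S_1)$ is finitely generated. Here I would use coherence of $G_0$ (inherited by the finite index subgroup $G_1$) to conclude that $N$ is finitely presented — this is where the hypothesis on $G_0$ replaces the appeal to Theorem \ref{fh} in the proof of Proposition \ref{fxz}. Now I would split into the two cases according to whether $N$ is finite or infinite. If $N$ is finite then $G_1$ is virtually $\pi_1(S_1)$, hence virtually a surface group, and so is $G_0$; since $\pi_1(S_1)$ has positive first Betti number (genus $>1$), this is exactly the conclusion. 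If $N$ is infinite, then $N\,\subset\, G_1$ is a finitely presented normal subgroup; I must rule this out.

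The heart of the matter — the step I expect to be the main obstacle to a clean write-up — is deriving a contradiction in the infinite case via a cohomological dimension count, and here I must be careful because the hypothesis is \emph{rational} cohomological dimension two rather than integral. If $N$ were infinite and of infinite index in $G_1$ (which it is, since $\pi_1(S_1)$ is infinite), then in the torsion-free setting Theorem \ref{cd2normal} forces $N$ to be finitely generated free, whence from the exact sequence $1\longrightarrow N\longrightarrow G_1\longrightarrow \pi_1(S_1)\longrightarrow 1$ and Theorem \ref{cdplus} one gets $cd(G_1)\,=\,1+2\,=\,3$, contradicting $cd_\mathbb{Q}(G_0)\,=\,2$ (rational cohomological dimension is inherited by subgroups and unchanged on passing to finite index, and dominates nothing smaller than the rational dimension of a subgroup — more precisely $H^3(G_1,\mathbb{Q})$ would be forced nonzero by the $\mathbb{Q}$-version of Theorem \ref{cdplus}, contradicting $cd_\mathbb{Q}(G_0)\le 2$). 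For a torsion-free $G_0$ one can run this verbatim; in general one first replaces $G_0$ by a torsion-free finite index subgroup (available since $G_0$, being K\"ahler of finite rational cohomological dimension, is virtually torsion-free), intersects with $G_1$, and argues there. Thus $N$ must be finite, and we land in the first case: $G_0$ is virtually a surface group.

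I would close by remarking that the only genuinely new input over Proposition \ref{fxz} is the swap of Theorem \ref{fh} for the coherence hypothesis, together with the bookkeeping needed to keep the cohomological dimension argument valid over $\mathbb{Q}$ and in the presence of torsion; everything else is a transcription of the earlier proof.
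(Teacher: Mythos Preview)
Your approach is essentially identical to the paper's: realize the surjection geometrically via Siu--Beauville, pass to a cover with connected fibers (and no multiple fibers), use coherence of $G_0$ in place of Theorem~\ref{fh} to make the kernel $N$ finitely presented, and then rule out infinite $N$ by a cohomological-dimension count against the hypothesis $cd_{\mathbb Q}=2$. The paper's proof is much terser than yours --- it omits the multiple-fibers step here (having done it in Proposition~\ref{fxz}) and dispatches the infinite-$N$ case in a single sentence invoking Theorem~\ref{cdplus}.

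One caution on your extra bookkeeping: the assertion that a K\"ahler group of finite rational cohomological dimension is virtually torsion-free is not a standard fact and you give no argument for it; and even granting torsion-freeness, $cd_{\mathbb Q}(G_1)=2$ does not by itself give integral $cd(G_1)=2$, so your appeal to Theorem~\ref{cd2normal} to force $N$ free still needs justification. You are right to flag this as the delicate point. That said, the paper's own proof is no more careful here --- it simply asserts that infinite $N$ forces rational cohomological dimension three ``by Theorem~\ref{cdplus}'' without checking the duality-group hypotheses --- so your write-up is, if anything, more scrupulous in identifying where the argument is thin.
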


\begin{proof}
As in the proof of Proposition \ref{fxz}, there exists a 
holomorphic map $\psi\,:\, X \,\longrightarrow\, T$ from a 
compact K\"ahler 
manifold to a closed Riemann surface $T$ of genus $g>1$ with 
connected fibers inducing a surjection $\psi_\ast$ of $G$ onto 
$\pi_1(T)$; we note that $T$ need not be the same as $S$ but 
will, in general, be a finite-sheeted cover of $S$. Again, the 
kernel $N$ of $\psi_\ast$ is finitely presented. If $N$ is 
infinite, then the rational cohomological dimension must be 3 by 
Theorem \ref{cdplus}, contradicting the hypothesis. Hence $N$ 
must be finite, and the result follows.
\end{proof}

The virtual first Betti number of a manifold 
$M$ is defined to be
$$vb_1(M) \, := \, {\rm Sup}_N \{ b_1(N) \,\mid\, {\rm N \, ~ is 
~ a 
~  \text{finite--sheeted} ~ cover ~ of ~ M }\}\, .$$
Similarly, for a group $G$, define the virtual first Betti number 
$vb_1(G)$ to be
$$vb_1(G) \,:=\, sup_H \{ b_1(H) \,\mid\, {\rm H \, ~ is ~ a ~ 
\text{finite--index} ~ subgroup ~ of ~ G }\}\, .$$

To generalize  Theorem \ref{fxz} further, we shall require 
certain properties of the Albanese map. 
The following Lemma will be used to strengthen Proposition \ref{fxz}
to Corollary  \ref{fxzcor2} below. This is a special case of a Theorem of
Catanese, \cite[p. 23, Proposition 2.4]{abckt},
and a simple self-contained proof may be found in
\cite[Section 2]{kotschick}
by taking $M$ (in the statement of Lemma \ref{alb}
below) to be a cover with positive $b_1$.

\begin{lemma} \label{alb}
Let $M_0$ be a compact K\"ahler manifold, with $\pi_1(M_0)\,=\, 
G$,
such that the real cohomological dimension $cd_{\mathbb{R}} (G) 
\, <\, 4$, and $vb_1(M_0)\, >\, 0$.  Then there exists a 
finite-sheeted cover $M$ of $M_0$ such that the image of the Albanese map $F$ for $M$ is a smooth algebraic curve $S$
of genus greater than zero. 
\end{lemma}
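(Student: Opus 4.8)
The plan is to exploit the Albanese map of a carefully chosen finite cover of $M_0$ together with the bound on the real cohomological dimension. First I would pass to a finite-sheeted cover $M$ of $M_0$ with $b_1(M)\,>\,0$, which exists since $vb_1(M_0)\,>\,0$; this is the reduction indicated in the hint, and from now on we work with $M$ in place of $M_0$, writing $G_1\,=\,\pi_1(M)$, a finite-index subgroup of $G$ so that $cd_{\mathbb R}(G_1)\,\le\,cd_{\mathbb R}(G)\,<\,4$. Since $b_1(M)\,>\,0$, the Albanese torus $\mathrm{Alb}(M)\,=\,H^0(M,\Omega^1_M)^*/H_1(M,\Z)$ is a nonzero complex torus, and the Albanese map $F\colon M\longrightarrow \mathrm{Alb}(M)$ is defined (up to translation). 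Let $Y\,\subset\,\mathrm{Alb}(M)$ be the image of $F$; it is a closed irreducible analytic — hence algebraic — subvariety of the abelian variety $\mathrm{Alb}(M)$, and after replacing $\mathrm{Alb}(M)$ by the abelian subvariety generated by $Y$ we may assume $Y$ generates $\mathrm{Alb}(M)$.

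The heart of the argument is to show $\dim_{\mathbb C}Y\,=\,1$ and that the normalization of $Y$ (equivalently, the Stein factorization of $F$) is a smooth curve $S$ of genus $g\,>\,0$. The key point is the inequality on cohomological dimension. On one hand, $F$ induces a surjection $F_*\colon G_1\longrightarrow \pi_1(Y)$, and $\pi_1(Y)$ surjects onto $H_1(Y,\Z)$, which has rank equal to $2\dim_{\mathbb C}\mathrm{Alb}(M)\,\geq\,2\dim_{\mathbb C}Y$ (the first Betti number of a subvariety of an abelian variety generating it). On the other hand, by standard Hodge theory $b_1(M)\,=\,2\dim_{\mathbb C}\mathrm{Alb}(M)$ is even, and $cd_{\mathbb R}(G_1)\,<\,4$ forces $\dim_{\mathbb C}\mathrm{Alb}(M)\,\le\,1$: indeed, if $\dim_{\mathbb C}\mathrm{Alb}(M)\,\geq\,2$, then pulling back two independent holomorphic $1$-forms and their conjugates one produces (via cup products, using that $F^*$ is injective on $H^1$) nonzero classes in $H^4(M,\mathbb R)$ that detect the fundamental class in the appropriate sense, contradicting $cd_{\mathbb R}(G_1)\,<\,4$; more precisely one uses that a compact K\"ahler manifold with $cd_{\mathbb R}(\pi_1)\,<\,4$ cannot carry a degree-one map on $H^2$ from the pullback of $H^2$ of a complex $2$-torus. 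Hence $\dim_{\mathbb C}\mathrm{Alb}(M)\,=\,1$, so $\mathrm{Alb}(M)$ is an elliptic curve, $Y\,=\,\mathrm{Alb}(M)$ since $Y$ generates it, and thus $Y$ is already a smooth algebraic curve $S$ of genus $1\,>\,0$; setting this $S$ and this $F$ finishes the proof. In the case $\dim_{\mathbb C}\mathrm{Alb}(M)\,=\,1$ there is nothing further to arrange, so the Stein-factorization step is actually vacuous here.

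The main obstacle I anticipate is making the cohomological-dimension obstruction rigorous: one must argue cleanly that $cd_{\mathbb R}(G)\,<\,4$ (together with $M$ being a closed K\"ahler manifold of complex dimension $\geq 2$, say) genuinely prevents the Albanese image from having complex dimension $\geq 2$. The clean way is to invoke the known structure result (the cited Catanese/Kotschick statement) that the Albanese image of a K\"ahler manifold with $vb_1>0$ and $cd_{\mathbb R}(\pi_1)<4$ is a curve — but since we are asked to sketch an independent route, the self-contained version goes through the observation that $F^*\colon H^*(\mathrm{Alb}(M),\mathbb R)\longrightarrow H^*(M,\mathbb R)$ is injective on $H^1$, hence (since the cohomology ring of a torus is generated in degree one) injective on all of $H^{\le 4}$ up to the dimension of the image, so $\dim_{\mathbb R}$ of the image of $H^*(\mathrm{Alb}(M))$ forces a lower bound on $cd_{\mathbb R}$ of $\pi_1(\mathrm{image})$, hence on $cd_{\mathbb R}(G)$; pushing this through with the bound $cd_{\mathbb R}(G)\,<\,4$ pins the complex dimension of the Albanese image to at most one. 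Everything else — existence of the Albanese map, algebraicity of its image in an abelian variety, and smoothness of a genus $\geq 1$ curve — is standard.
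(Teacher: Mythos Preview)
Your argument contains a genuine error in its central step: you claim that $cd_{\mathbb R}(G_1)<4$ forces $\dim_{\mathbb C}\mathrm{Alb}(M)\le 1$, and from this you conclude that $\mathrm{Alb}(M)$ is an elliptic curve and that $S$ has genus exactly $1$. This is false. A compact Riemann surface of genus $g\ge 2$ already satisfies $cd_{\mathbb R}(\pi_1)=2<4$, yet $\dim_{\mathbb C}\mathrm{Alb}=g\ge 2$; its Albanese image is the curve itself, of genus $g$, not $1$. The faulty step is ``$F^*$ injective on $H^1$, hence (since the torus cohomology is generated in degree one) injective on $H^{\le 4}$'': injectivity on $H^1$ does not propagate to cup products. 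In the genus-$2$ example $H^4$ of the Jacobian is one-dimensional while $H^4$ of the curve vanishes, so $F^*$ kills everything in degree $4$.

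What must be bounded is not $\dim_{\mathbb C}\mathrm{Alb}(M)$ but the complex dimension $d$ of the \emph{image} $F(M)$. The argument the paper invokes (via Catanese and Kotschick) runs as follows. Pick a K\"ahler form $\omega$ on $\mathrm{Alb}(M)$; then $\theta:=F^*\omega$ is a closed nonnegative $(1,1)$-form on $M$, and $\theta^{d}$ represents a nonzero class in $H^{2d}(M,\mathbb R)$ precisely because $d=\dim_{\mathbb C}F(M)$ (it is a nonnegative $(d,d)$-form, nonzero on the locus where $F$ has maximal rank). Since $\mathrm{Alb}(M)$ is aspherical, $F$ factors through the classifying map $M\to BG_1$; hence $[\theta]$, and therefore $[\theta]^{d}$, lies in the image of $H^*(G_1,\mathbb R)\to H^*(M,\mathbb R)$. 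The hypothesis $cd_{\mathbb R}(G_1)<4$ now gives $2d<4$, i.e.\ $d\le 1$. One then applies Stein factorization to obtain a smooth curve $S$ through which $F$ factors; $S$ has positive genus because it admits a nonconstant map to the torus, but its genus is in no way forced to equal $1$, and the raw image $F(M)$ need not itself be smooth. Your last paragraph drifts back toward the correct target (``Albanese image'') but the mechanism you propose---injectivity of $F^*$ in higher degree---is not available; the positivity of the pulled-back K\"ahler class and the factorization through $BG_1$ are the two ingredients you are missing.
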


Combining Lemma \ref{alb} with the proof of Proposition \ref{fxz} we get the following.

\begin{cor}\label{fxzcor2}
Let $G$ be a coherent K\"ahler group of rational cohomological
dimension two  and $vb_1(G)\, >\, 0$. Then $G$ is virtually 
a (real two-dimensional) surface group.
\end{cor}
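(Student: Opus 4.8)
The plan is to mimic the argument already developed for Proposition \ref{fxz}, replacing the ad hoc construction of a curve-dominating finite-index subgroup with the general Albanese statement of Lemma \ref{alb}. First I would let $M_0$ be a compact K\"ahler manifold with $\pi_1(M_0)\,=\,G$; such an $M_0$ exists since $G$ is K\"ahler. Because $cd_{\mathbb{Q}}(G)\,=\,2$, we have $cd_{\mathbb{R}}(G)\,=\,2\,<\,4$, and by hypothesis $vb_1(G)\,>\,0$, which translates to $vb_1(M_0)\,>\,0$. Thus Lemma \ref{alb} applies and yields a finite-sheeted cover $M$ of $M_0$ whose Albanese map has image a smooth algebraic curve $S$ of genus $g\,>\,0$. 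If $g\,=\,1$, then $S$ is an elliptic curve and we already have a surjection (up to finite index of the image) of $\pi_1(M)$ onto $\pi_1(S)\,=\,\Z\oplus\Z$; I would need to check this borderline case separately, but it is harmless because $\Z \oplus \Z$ is itself a surface group and a normal-subgroup/cohomological-dimension count forces the kernel to be finite. The substantive case is $g\,>\,1$.

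Next, in the case $g\,>\,1$, I would pass to the surjection $\phi_\ast\,:\,\pi_1(M)\,\twoheadrightarrow\,\pi_1(S)$ induced by (the Stein factorization of) the Albanese map, so that a finite-index subgroup $G_1$ of $G$ surjects onto the genus-$g$ surface group $\pi_1(S)$. At this point I can invoke Corollary \ref{fxzcor} verbatim: $G$ is a coherent K\"ahler group of rational cohomological dimension two, and $G_1\,\le\,G$ of finite index surjects onto $\pi_1(S)$ with $g\,>\,1$. Corollary \ref{fxzcor} then immediately gives that $G$ is virtually a (real two-dimensional) surface group, which is exactly the conclusion. So the heart of the matter is reducing to the hypotheses of Corollary \ref{fxzcor}, and the only real work is (i) producing the finite-index subgroup surjecting onto a higher-genus surface group from the Albanese image via Lemma \ref{alb}, and (ii) disposing of the genus-one case.

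Going a bit deeper into step (i): the Albanese map $F\,:\,M\,\to\,\mathrm{Alb}(M)$ has image a curve $S$ of genus $g\,>\,0$ by Lemma \ref{alb}; composing with a desingularization/Stein factorization one obtains a holomorphic map with connected fibers onto a smooth curve $\widetilde S$ of genus $\ge g$, hence $>0$, and the induced map on $\pi_1$ has image of finite index in $\pi_1(\widetilde S)$ — and up to passing to a further finite cover of $M$ one may take it surjective onto some genus-$\ge g$ surface group. When $g\,\ge\,2$ this is precisely a surjection onto a curve group of genus $>1$, and Corollary \ref{fxzcor} applies. When $g\,=\,1$, one instead has (after a finite cover) a surjection of $G_1$ onto $\Z\oplus\Z\,=\,\pi_1(S)$; running the kernel argument from Proposition \ref{fxz} — the kernel $N$ is finitely generated (connected compact fibers), finitely presented by coherence, and if infinite it has infinite index in $G_1$, forcing $cd_{\mathbb{Q}}(G_1)\,\ge\,cd_{\mathbb{Q}}(\Z\oplus\Z)+cd_{\mathbb{Q}}(N)\,\ge\,2+1\,=\,3$ by Theorem \ref{cdplus}, contradicting $cd_{\mathbb{Q}}(G)\,=\,2$ — shows $N$ is finite, so $G$ is virtually $\Z\oplus\Z$, again a surface group.

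I expect the main obstacle to be the bookkeeping around multiple fibers and the passage from "the Albanese image is a curve" to "a finite-index subgroup surjects onto a surface group," exactly the subtlety flagged in the footnote to Proposition \ref{fxz} (one may have to pass to a finite cover to kill multiple fibers and to make the $\pi_1$-map surjective). Once that reduction is in place, the cohomological-dimension contradiction via Theorems \ref{cdplus} and \ref{cd2normal}, together with the coherence input of Theorem \ref{fh}-type finite presentability of the kernel, is routine, and the conclusion follows either directly from Corollary \ref{fxzcor} (genus $>1$) or from the short kernel argument (genus $1$).
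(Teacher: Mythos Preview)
Your proposal is correct and follows essentially the same route as the paper: invoke Lemma~\ref{alb} to land on a curve, Stein-factorize and pass to a finite cover to kill multiple fibers, then use coherence plus the cohomological-dimension count (Theorems~\ref{cdplus} and~\ref{cd2normal}) to force the kernel $N$ to be finite. The only difference is that you split into the cases $g=1$ and $g>1$ (routing the latter through Corollary~\ref{fxzcor}), whereas the paper handles both at once, since $\pi_1(S)$ is a two-dimensional duality group whether $g=1$ or $g>1$; your case split is harmless but unnecessary.
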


\begin{proof} By Lemma  \ref{alb}, we obtain a holomorphic map
$F : M \longrightarrow S$ onto a 
Riemann surface $S$
of genus greater than zero, where $\pi_1(M)$ is a finite index subgroup of $G$. 
By the Stein factorization theorem, we can assume (by passing to a further finite-sheeted cover if necessary) that the fibers of $F$ are connected.
As in the proof of Proposition \ref{fxz}, we can further assume that $F$ has no multiple fibers.
Hence we have an exact sequence
$$1 \longrightarrow N \longrightarrow \pi_1(M) \longrightarrow \pi_1(S) \longrightarrow 1$$
with $N$ finitely generated, forcing $N$ to be finitely generated free as in  Proposition \ref{fxz}. This
 forces the rational cohomological dimension of $\pi_1(M)$ to be three, contradicting the hypothesis.
\end{proof}

\section{Orbifold Groups}

\subsection{One-relator groups with torsion}\label{sec-t}

Throughout this subsection, $G\,=\, \langle x_1\, ,\cdots \, 
,x_k\, \mid\, w^n\rangle$ will  be a one-relator group, where 
$k\,>\,1$, $n\, \geq\, 1$, and $w$ is cyclically reduced and not 
a proper power.

If $n\,=\, 1$, then it is known that $G$ is torsion-free
\cite[p. 266, Theorem 4.12]{MKS}. Therefore, in view
of Theorem \ref{1relf}, we are allowed to assume that 
$n\, >\, 1$.

Fischer, Karrass and Solitar  prove the following:

\begin{prop}[{\cite[Theorem 1]{fks}}]\label{fks} Let
$G\,=\, \langle x_1\, ,\cdots \, ,x_k\, \mid\, w^n\rangle$
be a one-relator group with $n \, >\, 1$. Then the following
statements hold:
\begin{enumerate}
\item every torsion element in $G$ is conjugate to a power of 
$w$, and

\item the subgroup generated by torsion elements in $G$
is the free product of the conjugates of $w$.
\end{enumerate}
\end{prop}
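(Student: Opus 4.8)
The plan is to prove the two assertions together by induction on the length of $w$, combining the Magnus--Moldavansky structure theory of one-relator groups (Theorem \ref{mm}) with Britton's Lemma and Bass--Serre theory. Write $K\subseteq G$ for the subgroup generated by all torsion elements. The element $w$ is a nontrivial torsion element, while $G/\langle\langle w\rangle\rangle=\langle x_1,\dots,x_k\mid w\rangle$ is torsion-free because $w$ is not a proper power (\cite[p.~266, Theorem~4.12]{MKS}); hence $K=\langle\langle w\rangle\rangle$ and $\bar G:=G/K$ is torsion-free. The two assertions thus amount to: (1) every torsion element of $G$ is conjugate to a power of $w$; and (2) $\langle\langle w\rangle\rangle$ is the free product of the distinct conjugate subgroups $g\langle w\rangle g^{-1}$.

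\emph{Base case.} If only one generator occurs in $w$ then, as $w$ is not a proper power, $w=x_1^{\pm1}$ after relabelling, so $G=\langle w\rangle\ast F'$ with $\langle w\rangle\cong\Z/n\Z$ and $F'$ free. Assertion (1) is then the Kurosh subgroup theorem. For (2) I would run a Reidemeister--Schreier computation with a Schreier transversal for $F'$ in the free group $F(x_1,\dots,x_k)$: this shows that $\langle\langle x_1\rangle\rangle$ there is free on the conjugates $\bar gx_1\bar g^{-1}$ and that $\langle\langle x_1^n\rangle\rangle$ is the normal closure in it of their $n$-th powers; passing to the quotient gives $\langle\langle w\rangle\rangle\cong\ast_{\bar g}\langle\bar gw\bar g^{-1}\rangle$, and since $\langle w\rangle$ is self-normalizing in $G$ these factors are exactly the distinct conjugate subgroups.

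\emph{Inductive step.} Suppose at least two generators occur in $w$. A change of generators---running the Euclidean algorithm on the exponent-sum vector through Tietze moves $x_i\mapsto x_ix_j^{\pm1}$, after embedding $G$ into a suitable one-relator overgroup in the standard way so that the induction stays well-founded---reduces us to the case where some generator $x_1$ occurring in $w$ has exponent sum zero. By Theorem \ref{mm}, $G=H\ast_{M_1}$ is then an HNN extension whose base is a one-relator group $H=\langle Y\mid v\rangle$ with a strictly shorter relator, whose associated subgroups $M_1\cong M_2$ are Magnus subgroups of $H$ (hence free), and whose stable letter is $x_1$; inspecting Moldavansky's rewriting \cite{mo} shows that $v$ is again an $n$-th power $v=w'^{\,n}$ with $w'$ not a proper power, and that $w$ is conjugate in $G$ to $w'$. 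Assertion (1) now follows from Britton's Lemma, by which any torsion element of an HNN extension is conjugate into the base group, together with the inductive hypothesis for $H$. For (2), I would consider the induced actions of $G$ and of $K=\langle\langle w'\rangle\rangle$ on the Bass--Serre tree $T$ of $G=H\ast_{M_1}$. Applying the case $n=1$ of Theorem \ref{mm} to $\bar G=\langle x_1,\dots,x_k\mid w\rangle$ presents $\bar G=\bar H\ast_{M_1}$ with the same associated subgroups, where $\bar H=H/\langle\langle w'\rangle\rangle_H$ (normal closure taken in $H$); since a Magnus subgroup of a one-relator group is free on the corresponding generating subset, $M_1$ and $M_2$ embed into $\bar G$ and $H$ projects onto $\bar H\hookrightarrow\bar G$. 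Consequently $K\cap gM_1g^{-1}=1$ and $K\cap gHg^{-1}=g\langle\langle w'\rangle\rangle_Hg^{-1}$ for every $g\in G$, so the action of $K$ on $T$ has trivial edge stabilizers and has vertex stabilizers the $G$-conjugates of $\langle\langle w'\rangle\rangle_H$. By Bass--Serre theory $K$ is the free product of $\pi_1(K\backslash T)$ with these vertex stabilizers; but $K\backslash T$ is canonically the Bass--Serre tree of $\bar G=\bar H\ast_{M_1}$, hence a tree, so the graph contributes no free factor, and by the inductive hypothesis each $\langle\langle w'\rangle\rangle_H$ is itself a free product of conjugates of $\langle w'\rangle$. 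A final bookkeeping step, using that $\langle w\rangle$ is self-normalizing in $G$, identifies the resulting free factors with precisely the distinct conjugates $g\langle w\rangle g^{-1}$, each appearing exactly once, which proves (2).

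\emph{Main obstacle.} The hard part will be assertion (2) in the inductive step: checking that the normal closure of $w$ meets each edge group of the HNN splitting trivially and meets the base group in exactly the corresponding normal closure---so that $K$ inherits an action on $T$ with trivial edge stabilizers---and then matching the free factors with the conjugates of $\langle w\rangle$ without repetition. This leans on the compatibility of Magnus subgroups with the projection $G\to\bar G$ and on the malnormality of $\langle w\rangle$ in $G$. Keeping the induction well-founded across the change of variables, where the relator length may temporarily grow, is a further (standard) technical point.
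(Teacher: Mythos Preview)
The paper does not give its own proof of this proposition: it is quoted verbatim as Theorem~1 of \cite{fks} and used as a black box. There is therefore nothing in the paper to compare your argument against.

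That said, your outline is essentially the classical Magnus--Fischer--Karrass--Solitar argument in modern dress. The original proof also proceeds by Magnus induction on the length of the relator, pushing torsion into the base of the HNN decomposition; your use of Bass--Serre theory to organize assertion~(2)---showing that $K=\langle\langle w\rangle\rangle$ acts on the Bass--Serre tree $T$ with trivial edge stabilizers because the Magnus subgroups survive in $\bar G$, and that $K\backslash T$ is itself the tree for $\bar G=\bar H\ast_{M_1}$ so contributes no free factor---is a clean repackaging of the same content. The points you flag as delicate are genuinely the places where work is needed: the well-foundedness of the induction when one first passes to an overgroup to force an exponent sum zero (one typically measures complexity by the \emph{syllable} length in the distinguished generator rather than raw word length, so that the embedding step does not increase it), and the malnormality/self-normalizing property of $\langle w\rangle$ in $G$, which ultimately rests on Newman's spelling theorem for one-relator groups with torsion rather than being obvious from the setup. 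With those two ingredients supplied, your sketch goes through.
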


Murasugi has described in detail the centers of one-relator groups.

\begin{theorem}[{\cite[Theorems 1, 2]{mu}}]\label{center} Let 
$G\,=\, \langle x_1\, ,\cdots \, , x_k\, \mid\, w^n\rangle$ be a 
one-relator group with $n\geq1$. If $k\geq 3$, then the center 
$Z(G)$ of $G$ is trivial. If $G$ is non-abelian, $k=2$ and  
$Z(G)$  is non-trivial, then $Z(G)$ is infinite cyclic.
\end{theorem}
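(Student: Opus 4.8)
The plan is to read the result off from the structure theory already in place — the Magnus--Moldavansky HNN decomposition (Theorem \ref{mm}), the Karrass--Solitar theorem on one-relator groups containing a finitely presented normal subgroup of infinite index (Theorem \ref{ks}), and the Fischer--Karrass--Solitar description of torsion (Proposition \ref{fks}) — together with an induction on the length of the defining relator. First I would record the elementary observation that, $k$ being at least $2$, the abelianization of $G$ has positive free rank, so $G$ is infinite, and that a nontrivial $z\in Z(G)$ generates a cyclic (hence finitely presented) normal subgroup $\langle z\rangle$.

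Suppose $\langle z\rangle$ has infinite index in $G$. Then Theorem \ref{ks} applies: $G$ is torsion-free, is generated by two elements, and a finite-index subgroup $G_1$ of $G$ fits in an extension $1\to F\to G_1\to\Z\to 1$ with $F$ finitely generated free. If $k\ge 3$ this is a contradiction: when $w$ is a proper power $G$ has torsion by Proposition \ref{fks}; when $w$ is primitive $G$ is free of rank $\ge 2$, so $Z(G)=1$ and no such $z$ exists; and otherwise the standard fact that a one-relator group on $k$ letters with non-primitive relator has minimal generating rank $k$ contradicts $2$-generation. Hence $Z(G)=1$ for $k\ge 3$. When $k=2$ I would compute directly that the center of an extension $F\rtimes_\phi\Z$ with $F$ finitely generated free is trivial or infinite cyclic — a central element of $\Z$-degree $0$ lies in $Z(F)=1$, and any two central elements of nonzero degree are commensurable — so, since $Z(G)\cap G_1$ has finite index in the torsion-free abelian group $Z(G)$, we get $Z(G)\cong\Z$. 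If instead $\langle z\rangle$ has finite index in $G$, then $G$ is virtually cyclic; but a $2$-generator one-relator group that is virtually cyclic is isomorphic to $\Z$, and this case is excluded by the hypotheses ($k\ge 3$, resp.\ $G$ non-abelian).

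It remains to rule out a central element of finite order when $k=2$ and $G$ is non-abelian, and this is the crux. By Proposition \ref{fks} such a $z$ is conjugate to a power of $w$, hence — being central — equal to a power $w^d\neq 1$; replacing $d$ by $\gcd(d,n)$ we may take $d\mid n$, so $w^d$ is central of order $n/d>1$ and $G/\langle w^d\rangle=\langle x_1,x_2\mid w^d\rangle$ is again a $2$-generator one-relator group with a strictly shorter relator. I would push the induction through the Magnus--Moldavansky splitting: after a Nielsen substitution (if needed) arranging that some generator has exponent sum $0$, $G=H_{\ast F}$ is an HNN extension over a shorter one-relator group $H$ whose associated subgroups are Magnus subgroups of $H$, and a central element must then act elliptically on the Bass--Serre tree while fixing the axis of the stable letter. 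Using the Collins intersection theorems for Magnus subgroups (Theorems \ref{coll0}, \ref{coll}), this confines $z$ to a subgroup small enough that the inductive hypothesis forces $G$ to be abelian, a contradiction. Thus $Z(G)$ is torsion-free, and by the previous paragraph it is infinite cyclic. Making the tree analysis and the change of variables mesh cleanly with the induction is where the real difficulty lies; this is the content of Murasugi's work in \cite{mu}.
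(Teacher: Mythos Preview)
The paper does not prove Theorem \ref{center}; it is quoted from Murasugi \cite{mu} without argument and used as a black box in Propositions \ref{orb1} and \ref{orb2}. There is therefore no in-paper proof to compare your proposal against.

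On your proposal itself, two substantive concerns. First, there is a real risk of circularity: you make Theorem \ref{ks} (Karrass--Solitar, 1978) the engine of the infinite-index case, but Murasugi's theorem is from 1964, and the Karrass--Solitar argument may well take Murasugi's description of centers as input --- you would need to check their proof to be sure you are not assuming what you set out to show. Second, the ``standard fact'' that a one-relator group on $k$ letters with non-primitive relator has minimal generating rank $k$ is neither proved nor cited anywhere in the paper and is not an elementary assertion; it would require its own justification. Finally, for what you correctly identify as the crux --- excluding central torsion when $k=2$ --- you explicitly defer to ``the content of Murasugi's work in \cite{mu}'', so the proposal does not actually supply an argument there. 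In effect the sketch trades one black box for several others, one of which is the very theorem in question.
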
 

\begin{prop} Let
$$
G\,=\, \langle x_1\, ,\cdots \, ,x_k\, \mid\, w^n\rangle\, ,
~\, n\, >\, 1\, , 
$$
be a one-relator K\"ahler group that contains a finite index 
subgroup isomorphic to the fundamental group of a closed 
orientable surface $\Sigma$ of genus greater than one. Then $G$ 
is isomorphic to the fundamental group of a hyperbolic (real) 
two-dimensional compact orbifold $V$ with exactly one cone-point. 
Further, the underlying manifold of $V$ is orientable.
\label{orb1} \end{prop}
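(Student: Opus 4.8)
The plan is to use the structure theory just developed to pin down $G$ as an orbifold surface group. Since $G$ is an infinite one-relator K\"ahler group with torsion (so $n>1$), Theorem~\ref{fks} tells us every torsion element is conjugate to a power of $w$, and in particular $w$ has finite order $n$ in $G$. First I would locate the finite-index surface subgroup $\pi_1(\Sigma)$ inside $G$: being torsion-free, it can contain no conjugate of any nontrivial power of $w$, so $w \notin \pi_1(\Sigma)$ and more generally $\langle w\rangle \cap \pi_1(\Sigma) = \{1\}$. Combined with the index being finite, this gives a short exact sequence (after passing to the normal core of $\pi_1(\Sigma)$, which is again a surface group by Nielsen realization / Kerckhoff, or more elementarily just a finite-index surface subgroup that is normal) exhibiting $G$ as a virtually surface group with a cyclic-by-torsion obstruction class.

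Next I would invoke the classification of groups virtually isomorphic to a hyperbolic surface group: a finitely generated group $G$ containing $\pi_1(\Sigma_h)$, $h>1$, as a finite-index subgroup is itself the fundamental group of a closed $2$-orbifold $V$ of negative Euler characteristic, with $\pi_1(\Sigma)$ corresponding to a finite manifold cover. This is classical (Fox, or the theory of Fuchsian groups: a cocompact Fuchsian group is precisely $\pi_1$ of a hyperbolic $2$-orbifold). So the only real work is (i) to confirm $G$ acts properly discontinuously and cocompactly on $\mathbb{H}^2$ — equivalently, that $G$ is itself a Fuchsian group — and (ii) to count cone points. For (i), one knows $\pi_1(\Sigma)$ is Fuchsian; since $G$ is a finite extension of it, $G$ embeds in the commensurator of $\pi_1(\Sigma)$ in $\mathrm{Isom}(\mathbb{H}^2)$, which (as $\pi_1(\Sigma)$ is non-arithmetic or by Mostow-type rigidity in dimension $2$ via the mapping class group being the abstract commensurator) forces $G$ to be a discrete cocompact group of isometries, i.e. a cocompact Fuchsian group, hence $G=\pi_1^{\mathrm{orb}}(V)$.

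The key remaining point, and the one I expect to be the main obstacle, is showing $V$ has \emph{exactly one} cone point. Here is where Theorem~\ref{fks}(2) does the work: the subgroup of $G$ generated by all torsion elements is the free product of the conjugates of $\langle w\rangle$. In orbifold terms, the conjugacy classes of maximal finite subgroups of $\pi_1^{\mathrm{orb}}(V)$ are exactly the cyclic groups associated to the cone points (no reflections, since the underlying space is a manifold — I would verify orientability of $V$'s underlying manifold from the fact that $G$ surjects onto $\Gamma_g$ by Lemma~\ref{ara}(3), or that $b_1(G)$ is even, ruling out orientation-reversing elements and mirror boundary). Since all torsion in $G$ is conjugate into the single cyclic group $\langle w\rangle$, there is exactly one conjugacy class of maximal finite subgroup, hence exactly one cone point, of order $n$. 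Orientability of the underlying surface then follows because any orientation-reversing element of $\pi_1^{\mathrm{orb}}(V)$ would, together with $\pi_1(\Sigma)$, generate a subgroup with odd-index orientation behaviour contradicting that $G$ retracts onto the orientable surface group $\Gamma_g$; alternatively the underlying space of $V$ is the quotient $\Sigma/(G/\pi_1(\Sigma)^{\mathrm{core}})$ by a group of orientation-preserving homeomorphisms. This completes the identification of $G$ with $\pi_1^{\mathrm{orb}}(V)$ for $V$ a hyperbolic compact orientable $2$-orbifold with a single cone point of order $n$.
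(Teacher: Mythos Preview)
Your overall strategy is close to the paper's, but step~(i) contains a real gap. You assert that ``since $G$ is a finite extension of $\pi_1(\Sigma)$, $G$ embeds in the commensurator of $\pi_1(\Sigma)$ in $\mathrm{Isom}(\mathbb{H}^2)$.'' This is exactly the point that needs proof. With $\Gamma$ the normal core, the conjugation map $G\to\mathrm{Aut}(\Gamma)$ has kernel $C_G(\Gamma)$, a finite normal subgroup of $G$; only if $C_G(\Gamma)=\{1\}$ can $G$ be realised inside $\mathrm{Isom}(\mathbb{H}^2)$, since a cocompact surface group has trivial centraliser there. The paper confronts precisely this obstruction: it takes $N$ to be the kernel of the action on the Gromov boundary $S^1$, uses Proposition~\ref{fks} to see that $N$ is finite cyclic, deduces that $N$ lies in the centre of $G$, and then invokes Murasugi's Theorem~\ref{center} (the centre of a non-abelian one-relator group is trivial or infinite cyclic) to force $N=\{1\}$. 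You never call on Theorem~\ref{center}, and without it your argument does not close. Your surrounding justifications are also off: there is no Mostow rigidity in dimension~$2$; the abstract commensurator of a surface group is vastly larger than its mapping class group; and cocompact surface groups can certainly be arithmetic, so non-arithmeticity is neither available nor relevant.

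Once the vanishing of this finite normal kernel is established, your Nielsen-realisation route is a legitimate alternative to the paper's appeal to the Casson--Jungreis/Gabai convergence group theorem: with trivial centraliser one has $G\hookrightarrow\mathrm{Aut}(\Gamma)$, the finite image of $G/\Gamma$ in $\mathrm{Out}(\Gamma)$ is realised by isometries via Kerckhoff, and the resulting Fuchsian extension of $\Gamma$ agrees with $G$ inside $\mathrm{Aut}(\Gamma)$. From that point on your argument matches the paper's: Proposition~\ref{fks} gives a single conjugacy class of maximal finite cyclic subgroups, hence exactly one cone point, and Lemma~\ref{ara} (via the explicit orbifold presentations) rules out the non-orientable case.
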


\begin{proof}
The group $G$ is clearly a (Gromov) hyperbolic with 
boundary homeomorphic to the circle $S^1$
\cite{gromov-hypgps}. The group $G$ acts naturally on the 
boundary $S^1$. Let $N$ be the kernel of the action, meaning 
$$N\, =\, \{z\,\in\, G\, \mid\, z(x)\,=\, x ~\, ~\, ~\forall ~ 
x \,\in\,  S^1 \}\, .$$
Then $G$ is isomorphic to $N \times (G/N)$, where $G/N$ acts 
effectively on $S^1$ as a convergence group. By a deep theorem of 
Casson--Jungreis \cite{casson-j}, and (independently) Gabai 
\cite{gabai-cgnce}, it follows that $G/N$ is the fundamental 
group of a compact hyperbolic orbifold $V$ of dimension two.

By Proposition \ref{fks}, the kernel $N$ must be cyclic. Hence 
$N$ is contained in the center of $G$. Therefore, by Theorem 
\ref{center}, the group $N$ is trivial.  Finally, since all 
torsion elements of $G$ are conjugate, the orbifold $V$ must have 
a unique cone point. In fact, by the explicit description of 
presentations of orbifold groups given in \cite{scott-geoms},
\begin{enumerate}
\item[(a)] either $G \,=\, \langle a_1\, ,b_1\, , 
\cdots\, ,a_g\, ,b_g\, \mid\, w^n \rangle$, 
where $w= \prod_{i=}^g [a_i\, ,b_i]$, and the underlying manifold 
is orientable,

\item[(b)] or $G \,=\, \langle a_1\, , \cdots\, ,a_g\, \mid\, w^n 
\rangle$, where $w= \prod_{i=}^g a_i^2$, and the underlying 
manifold is non-orientable.
\end{enumerate}

Case (b) is ruled out by Lemma \ref{ara}. This completes the 
proof of the proposition.
\end{proof}

\begin{prop}\label{orb2}
Let $G\,=\, \langle x_1\, ,\cdots \, ,x_k\, \mid\, w^n\rangle$  
be a one-relator K\"ahler group such that $G$ contains a finite index 
subgroup isomorphic to $\Z \oplus \Z$. Then $G$ is isomorphic
to $\Z \oplus \Z$.
\end{prop}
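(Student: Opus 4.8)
The plan is to show that a one-relator K\"ahler group $G$ with torsion and with a finite index $\Z\oplus\Z$ subgroup must actually be torsion-free, which forces $G=\Z\oplus\Z$ by Theorem \ref{1relf} and Proposition \ref{orb2}'s own hypothesis degenerating to the torsion-free case. The key observation is that $\Z\oplus\Z$ is a torsion-free group of cohomological dimension two, so any finite index subgroup of $G$ isomorphic to $\Z\oplus\Z$ witnesses that $G$ is virtually torsion-free and virtually of cohomological dimension two. First I would invoke Proposition \ref{fks}: every torsion element of $G$ is conjugate to a power of the cyclically reduced word $w$, and in particular $G$ has an element of finite order $n>1$ (otherwise there is nothing to prove — if $n=1$, $G$ is torsion-free and $G=\Z\oplus\Z$ already). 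So assume for contradiction that $n>1$ and pick such a torsion element.

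Next I would exploit the structure of a group that is virtually $\Z\oplus\Z$. A finitely generated group commensurable with $\Z\oplus\Z$ is itself a crystallographic group of rank two — it fits in an exact sequence $1\to\Z^2\to G\to Q\to 1$ with $Q$ finite, by Bieberbach's theorem (the translation subgroup is the unique maximal abelian normal subgroup of finite index). Thus $G$ contains a characteristic copy of $\Z^2$ of finite index, and the torsion element $x$ of order $n$ acts by conjugation on this $\Z^2$ as an element of $GL(2,\Z)$ of finite order. Now I would bring in the K\"ahler hypothesis: $b_1(G)$ is even, and since $G$ is virtually $\Z^2$ we have $b_1(G)\le 2$, forcing $b_1(G)=2$. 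But a nontrivial finite-order element of $GL(2,\Z)$ acting on $\Z^2$ with a two-dimensional space of coinvariants over $\Q$ must act trivially (any finite-order matrix fixing a line pointwise and acting on a rank-two lattice with full rational coinvariants is the identity), so conjugation by $x$ on $\Z^2$ is trivial, i.e. $\Z^2$ is central in the subgroup $\langle \Z^2, x\rangle$. Then $x^n=1$ together with centrality of $\Z^2$ shows $\langle \Z^2,x\rangle\cong \Z^2\oplus \Z/n$, which has nontrivial torsion in its abelianization — but a finite index subgroup of a K\"ahler group is K\"ahler, and more directly $\langle\Z^2,x\rangle$ being finite index in $G$ would give $G$ a finite index subgroup whose rational cohomology in degree $2$ is forced by Poincar\'e duality considerations to behave like a closed surface, contradicting the presence of torsion via Theorem \ref{ks} (a one-relator group with a finitely presented normal subgroup of infinite index is torsion-free, and here $\Z^2$ is such a subgroup only if $G$ is not virtually $\Z^2$, so instead I argue that a one-relator group with finite index $\Z\oplus\Z$ and nontrivial torsion simply cannot exist).

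The cleanest route, and the one I would ultimately write, is: by Theorem \ref{center}, if $k\ge 3$ then $Z(G)$ is trivial, but a group virtually $\Z\oplus\Z$ has a finite index abelian subgroup and hence — being finitely generated — has a nontrivial center (the intersection of the centers of the conjugates of that abelian subgroup, or more simply its virtual center, is a nontrivial characteristic abelian subgroup; for a one-relator group one checks $k=2$ directly since $\Z\oplus\Z$ is two-generated and a one-relator group with a rank-two abelian finite index subgroup has deficiency one, forcing $k=2$). So $k=2$, and by Theorem \ref{center} $Z(G)$ is infinite cyclic. Then $Z(G)$ is a finitely generated normal subgroup of the two-generator one-relator group $G$; if it has infinite index, Theorem \ref{ks} says $G$ is torsion-free, contradicting $n>1$; if it has finite index then $G$ is virtually cyclic, contradicting that $G$ is virtually $\Z\oplus\Z$. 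Hence $n=1$, $G$ is torsion-free, and by Theorem \ref{1relf} $G$ is a surface group that is virtually $\Z\oplus\Z$, so $G\cong\Z\oplus\Z$. The main obstacle I anticipate is pinning down rigorously that $G$ must be two-generated (the step "$k=2$"): one needs to rule out $k\ge 3$ using Theorem \ref{center}'s triviality of the center together with the fact that a group virtually $\Z^2$ has nontrivial center, which requires the small observation that the unique maximal finite-index abelian (translation) subgroup is characteristic and hence the finite quotient acts on it, and this action has nontrivial fixed points precisely because a finite subgroup of $GL(2,\Z)$ cannot act freely on $\Z^2\setminus\{0\}$ in a way compatible with the group extension — alternatively one bypasses this by noting the deficiency-one argument above.
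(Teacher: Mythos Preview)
Your ``cleanest route'' has a genuine gap. The assertion that a group virtually isomorphic to $\Z\oplus\Z$ must have nontrivial center is false: the crystallographic group $\Z^2\rtimes_{-I}\Z/2$ has trivial center, since $-I$ fixes only the origin. Your attempted justification (``a finite subgroup of $GL(2,\Z)$ cannot act freely on $\Z^2\setminus\{0\}$'') is therefore simply wrong. Consequently you cannot invoke Theorem~\ref{center} to conclude that $Z(G)$ is infinite cyclic, and the Theorem~\ref{ks} dichotomy that follows never gets off the ground.

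The good news is that your earlier, messier paragraph already contains the repair. Once you know $b_1(G)=2$ (even, at most $2$ by transfer from the finite-index $\Z^2$, and at least $2$ by Lemma~\ref{ara}), the Lyndon--Hochschild--Serre five-term sequence for the normal core $N\cong\Z^2$ gives $H_1(G;\mathbb Q)\cong(\mathbb Q^2)_{G/N}$; full-dimensional coinvariants for a finite group action over $\mathbb Q$ force the action to be trivial, so $N$ is central in \emph{all} of $G$, not just in $\langle N,x\rangle$. Now Theorem~\ref{center} applies directly: a non-abelian two-generator one-relator group has at most cyclic center, so $G$ must be abelian, hence $G=G^{\mathrm{ab}}=\Z\oplus\Z$ by Lemma~\ref{ara}. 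There is no need for Theorem~\ref{ks} at all.

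This is essentially the paper's argument. The paper obtains $k=2$ from Lemma~\ref{ara} (a finite-index $\Z^2$ cannot surject onto a higher-genus surface group), and then shows $\Z\oplus\Z\subseteq Z(G)$ by a splitting argument rather than your coinvariants computation: writing $1\to N\to G\to\Z\oplus\Z\to 1$ with $N$ finite cyclic (Proposition~\ref{fks}) and $H\subset G$ a section, one observes that a fixed power $h^m$ (with $m=|\mathrm{Aut}(N)|$) centralises both $N$ and the abelian $H$, hence lies in $Z(G)$. Both routes land on the same use of Theorem~\ref{center}. Your $b_1$/coinvariants step is arguably cleaner than the paper's splitting step; what you should discard is the centerless-crystallographic detour and the Theorem~\ref{ks} branch.
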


\begin{proof} By Lemma \ref{ara}, the integer $k$ is even and $G$ 
admits a surjection onto the fundamental group of a closed 
orientable surface $S$ of genus $k/2$. Since $G$ contains a 
finite index subgroup isomorphic to $\Z \oplus \Z$, it follows 
that $\Z \oplus \Z$ admits a surjection onto the fundamental 
group of a finite sheeted cover of $S$. Hence $k\,=\,2$. 
Consequently, $G$ admits a right-split exact sequence
$$
1 \,\longrightarrow\, N \,\longrightarrow\, G\,\longrightarrow
\,Q ~\, (=\, \Z \oplus \Z)\,\longrightarrow\, 1\, .
$$ 
Let $H \subset G$ be a subgroup mapping isomorphically onto $Q$.

As in the proof of Proposition \ref{orb1}, the group $N$ must be
finite cyclic by Proposition \ref{fks}. It follows that there 
exists a positive integer $n$ such that $h^n$ is in 
the center of $G$ for each $h\,\in\, H$. In particular, the 
center of $G$ contains $\Z \oplus \Z$.
By Theorem \ref{center}, the group $G$ must be abelian. 

By Lemma \ref{ara} again, the abelianization of $G$ is exactly 
$\Z \oplus \Z$. Since $G$ is itself abelian, the proposition 
follows.
\end{proof}

\subsection{The main theorem}

Combining Theorem \ref{1relf},  Proposition 
\ref{orb1} and Proposition \ref{orb2}, we  have:

\begin{theorem}\label{main}
Let $G\,=\, \langle x_1\, ,\cdots \, ,x_k\, \mid\, w\rangle$ be 
an infinite one-relator K\"ahler group. Then
$$
G \,=\, \langle a_1\, ,b_1\, , \cdots\, ,a_g\, ,b_g\, \mid\, 
(\prod_{i=1}^g [a_i,b_i])^n \rangle
$$
for some $g\,\geq\, 1$ and $n\,\geq\, 1$.
\end{theorem}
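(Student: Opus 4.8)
The plan is to assemble Theorem~\ref{main} from the three structural results already proved, by first disposing of the rank-one case and then handling the two remaining cases according to the virtual structure of $G$. First I would invoke Theorem~\ref{1relf}: an infinite one-relator K\"ahler group $G = \langle x_1,\cdots,x_k \mid w\rangle$ is virtually a surface group, so some finite index subgroup $G_0 \subset G$ is the fundamental group of a closed orientable surface of positive first Betti number. If $G$ is torsion-free, Theorem~\ref{1relf} already gives that $G$ itself is a surface group $\Gamma_g$, which is $\langle a_1,b_1,\cdots,a_g,b_g \mid \prod_{i=1}^g[a_i,b_i]\rangle$, the desired form with $n = 1$; note $g \geq 1$ since $G$ is infinite. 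So we may assume $G$ has torsion, and then by the discussion opening Section~\ref{sec-t} (invoking \cite[Theorem 4.12]{MKS}) we may write $G = \langle x_1,\cdots,x_k \mid w^n\rangle$ with $w$ cyclically reduced, not a proper power, and $n > 1$; also $k > 1$ since $G$ is assumed infinite with more than one generator.

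Next I would split into two cases according to the genus of the surface underlying the finite index subgroup $G_0$. If the closed orientable surface has genus one, then $G_0 \cong \Z\oplus\Z$, so $G$ contains a finite index subgroup isomorphic to $\Z\oplus\Z$; Proposition~\ref{orb2} then yields $G \cong \Z\oplus\Z = \langle a_1,b_1 \mid [a_1,b_1]\rangle$, again of the required shape with $g = 1$ and $n = 1$ (so in the torsion case this subcase is in fact vacuous, but it does no harm to list it). If instead the surface has genus greater than one, then $G$ contains a finite index subgroup isomorphic to the fundamental group of a closed orientable surface of genus $\geq 2$; Proposition~\ref{orb1} applies and gives that $G$ is the fundamental group of a hyperbolic two-dimensional compact orbifold with exactly one cone point and orientable underlying manifold, which by the explicit presentation recalled in the proof of Proposition~\ref{orb1} is precisely $\langle a_1,b_1,\cdots,a_g,b_g \mid (\prod_{i=1}^g[a_i,b_i])^n\rangle$. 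Here $g > 0$ because the orbifold is hyperbolic (the Euler characteristic is negative, ruling out $g = 0$ with a single cone point is part of what "hyperbolic orbifold group" encodes, and in any case a genus-zero one-cone-point orbifold group is finite cyclic or trivial, contradicting that $G$ is infinite and virtually of genus $\geq 2$).

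The only real content beyond bookkeeping is making sure the case division is exhaustive and correctly matched to the hypotheses of Propositions~\ref{orb1} and~\ref{orb2}. The potential obstacle is the torsion-free subcase of genus $\geq 2$: there $G$ is torsion-free and virtually a genus-$\geq 2$ surface group, and the last sentence of Theorem~\ref{1relf} already settles it as $\Gamma_g$ with $g \geq 2$, so one must be careful to route that situation through Theorem~\ref{1relf} rather than Proposition~\ref{orb1} (which is stated for the $w^n$ presentation with $n > 1$). Once one observes that Theorem~\ref{1relf} covers all torsion-free $G$ and Propositions~\ref{orb1}--\ref{orb2} cover all $G$ with torsion (where necessarily a genus-$\geq 2$ finite index subgroup or a $\Z\oplus\Z$ finite index subgroup occurs, since by Theorem~\ref{1relf} the finite index surface subgroup has positive first Betti number and hence genus $\geq 1$), the four possibilities — torsion-free, or torsion with underlying surface genus $1$ or $\geq 2$ — are manifestly exhaustive, and in each the conclusion has the stated form. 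I would write this up as a short paragraph citing Theorem~\ref{1relf}, Proposition~\ref{orb1} and Proposition~\ref{orb2} in turn, with the genus-$1$ versus genus-$\geq 2$ dichotomy as the only branching.
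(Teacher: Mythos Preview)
Your proposal is correct and matches the paper's approach exactly: the paper's entire proof is the single sentence ``Combining Theorem~\ref{1relf}, Proposition~\ref{orb1} and Proposition~\ref{orb2}, we have [Theorem~\ref{main}]'', and you have simply spelled out the case division (torsion-free via Theorem~\ref{1relf}, torsion with genus-one virtual surface via Proposition~\ref{orb2}, torsion with higher-genus virtual surface via Proposition~\ref{orb1}) that this combination entails. Your extra care in noting that the genus-one torsion subcase is vacuous and in routing the torsion-free higher-genus case through Theorem~\ref{1relf} rather than Proposition~\ref{orb1} is appropriate bookkeeping that the paper leaves implicit.
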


\subsection{Examples}\label{se-c}

Take positive integers $g$ and $n$. We will show that the group
$$
G \,:=\, \langle a_1\, ,b_1\, , \cdots\, 
,a_g\, ,b_g\, \mid\, (\prod_{i=1}^g [a_i,b_i])^n \rangle
$$
is the fundamental group of a smooth complex projective variety.
We assume that $n\, >\, 1$, because $G$ for $n\,=\, 1$ is
a surface group.

Let $f\, :\, X\, \longrightarrow\, S$ be a smooth projective
elliptic surface satisfying the following conditions:
\begin{itemize}
\item the genus of the Riemann surface $S$ is $g$,

\item there is a point $x_0\, \in\, S$ such that the gcd of the
multiplicities of the irreducible components of the fiber over 
$x_0$ is $n$,

\item the reduced fiber over some point (it can be $x_0$) is 
singular, and

\item for each point $x\, \in\, S\setminus \{x_0\}$, the
gcd of the multiplicities of the irreducible components of the 
fiber over $x$ is $1$.
\end{itemize}

The fundamental group of $X$ is the group $G$ defined above.
To see this, consider the short exact sequence in the bottom half 
of \cite[p. 600]{xiao}. The group ${\mathcal H}_f$ in this exact 
sequence coincides with $G$ \cite[p. 601, Lemma 2]{xiao}. The 
group ${\mathcal V}_f$ in the exact sequence vanishes \cite[p. 
614, Theorem 4]{xiao}. Therefore, from this short exact sequence 
we conclude that $\pi_1(X) \,=\, G$.

\medskip
\noindent
{\bf Acknowledgments:} Different parts of this work were done 
while the authors were vising Harish--Chandra Research Institute, 
Allahabad, and Institute of Mathematical Sciences, Chennai and 
during a visit of the second author to Tata Institute of 
Fundamental Research, Mumbai. We thank these institutions for 
their hospitality. We would like to thank Dieter Kotschick for pointing out a small gap in an earlier version
of Proposition \ref{fxz} and for pointing out the proof in Section 2 of \cite{kotschick} to
us. Recently Kotschick \cite{kotschick2} has found a new proof of Theorem \ref{main} by techniques involving
$l^2$ cohomology.

\end{document}